\documentclass[12pt]{article}
\usepackage[amsmath]{e-jc}

\usepackage{graphicx}

\dateline{March 11, 2026}{TBD}{TBD}

\MSC{05C57, 05C75, 05C85}

\Copyright{The authors. Released under the CC BY-ND license (International 4.0).}

\title{Broadcasting Agents and Adversary: \\A new variation on Cops and Robbers}

\author{William K. Moses Jr.\authornote{1}
\and
Amanda Redlich\authornote{2}
\and
Frederick Stock\authornote{3}
}

\authortext{1}{Department of Computer Science, Durham University, Durham, UK
   (\email{wkmjr3@gmail.com}).}
\authortext{2}{Department of Mathematics \& Statistics, UMass Lowell, Lowell, Massachusetts, USA (\email{amanda\_redlich@uml.edu}).}
\authortext{3}{Department of Computer Science, UMass Lowell, Lowell, Massachusetts, USA (\email{frederick\_stock@student.uml.edu}).}

\begin{document}

\maketitle

\begin{abstract}
 We introduce a new game played on graphs, ``Agents and Adversary".  This game is reminiscent of ``Cops and Robbers" but has some  fundamental differences.  We classify infinite families of graphs as Agents-win and Adversary-win.  We then define a new type of graph symmetry and use it to define a winning strategy for Adversary.  Finally, we give tight upper and lower bounds for Agents' time-to-win on several infinite families of graphs.
\end{abstract}

\section{Introduction}
Here we discuss a new graph game, played by a team of agents called Agents and an opponent called Adversary.  One member of Agents knows information at the beginning of the game.  Agents' goal is to share this information with the rest of the agents.  Adversary's goal is to prevent this by removing edges of the graph.  

This game was first described in Das, Giachoudis, Luccio, and Markou~\cite{DGLM20} (journal version~\cite{DGLM26}), although in different language. Building on previous work in~\cite{MRS25} and its extended version~\cite{MRSarxiv}, in this paper we classify graphs according to which player wins.  The main results are Theorems \ref{cutvx} and \ref{symmetry}, which give general constructions for Adversary win and Agents win graphs, and Theorem \ref{time}, which gives a tight bound on the length of the game play on certain graphs.

This information-sharing problem has a long history of being studied in computer science, starting with work by Slater, Cockayne, and Hedetniemi~\cite{SCH81}. A related problem where all agents have messages they want to transfer to each other, sometimes called gossip or information dissemination, has an even longer history of being studied in computer science. Hedetniemi, Hedetniemi, and Liestman~\cite{HHL88} is a nice survey on the early beginnings of these studies. A setting closer to the current one is the distributed message passing setting with dynamic networks and this information-sharing problem has been studied in this setting by Awerbuch and Even~\cite{AE84}, Casteigts, Flocchini, Mans, and Santoro~\cite{CFMS15}, Clementi, Pasquale, Monti, and Silvestri~\cite{CPMS09}, Kuhn, Lynch, and Oshman~\cite{KLO10}, and  O'Dell and Wattenhofer~\cite{OW05}. Another related setting where this problem has been studied is that of mobile agents on a static network attempting to broadcast information, as studied by Anaya, Chalopin, Czyzowicz, Labourel, Pelc, and Vax{\`e}s~\cite{ACCLPV16}, Czyzowicz, Diks, Moussi, and Rytter~\cite{CDMR18}, and Czyzowicz, Diks, Moussi, and Rytter~\cite{CDMR19}. 

Furthermore, this particular type of information-sharing question has obvious connections to cops-and-robbers.  In this case the cops (knowledgeable agents) and robbers (ignorant agents) are  cooperating and want to be co-located.  Cops-and-robbers, of course, also has a long history.  See Bonato and Nowakowski~\cite{CRbook} for an overview.  Early cops-and-robbers questions focused on classifying graphs as cop or robber wins (e.g.  Nowakowski and Winkler~\cite{FirstCR}, Frankl \cite{FRANKL1987301}).  Similarly, we begin our study by classifying graphs.  

In particular, we classify graphs according to how many agents are required for an Agents win, or if Adversary wins in all cases.  It is known that on any tree only two agents are enough, while on a cycle three are required (\cite{DGLM20, DGLM26}.  It seems natural to assume the minimum number of agents required is is related to the graph's edge-connectivity.  In fact a relationship between winning strategies and number of ``superfluous" edges was first conjectured in \cite{DGLM20}.  However, \cite{MRS25, MRSarxiv} show this is not the case.  Here we give some initial results on what graph properties \emph{do} determine winning strategies.

We start by formally defining the game and giving some illustrative examples in Section 2.  In Section 3, we give infinite classes of Agents win and Adversary win graphs, generated by specific graph constructions that preserve strategies.  We then describe a general winning strategy for Adversary, and a class of graphs to which it applies, in Section 4.  Finally, Section 5 contains conjectures and open questions.

\section{Definitions}
Here we are concerned with a game, $\text{BROADCAST}(G,n)$, played on a graph $G$ by two entities, ``Agents" and ``Adversary".  An Agents win happens when all agents become ``knowledgeable".  Otherwise, if Adversary can prevent all agents becoming knowledgeable, it is an Adversary win.  

This game was first introduced in \cite{DGLM20}, and discussed there in the context of distributed computing applications.  Informally, we can imagine the adversary as a hacker trying to prevent a team of agents from fully communicating with each other over a network.  A formal definition of the rules of the game follows.  

\begin{definition}
    The game $\text{BROADCAST}(G,k)$:
    \begin{itemize}
        \item SETUP: Adversary places $k-1$ ignorant agents and $1$ knowledgeable agent on $k$ distinct vertices of $G$.
        \item ADVERSARY MOVE AT TIME $t$: Adversary selects an arbitrary connected spanning subgraph of $G$, $G_t$.
        \item AGENTS MOVE AT TIME $t$: Each agent moves to a vertex in $G_t$ adjacent to its location at time $t-1$ or remains in its previous location.  If multiple agents are located at the same vertex at time $t$ and one is knowledgeable at time $t-1$, they all become knowledgeable at time $t$.
        \item GAME WINNER: If all agents become knowledgeable, the game ends with Agents win. If Adversary has a strategy to prevent all agents becoming knowledgeable, then it is an Adversary win. 
    \end{itemize}
\end{definition}

This game is similar to cops and robbers, with agents traveling through the graph and winning when co-located.  However, in this case the agents are all trying to be co-located, not avoid each other.  In a sense, the robbers are cooperating with the cops, and the graph itself is the ``opponent".

\subsection{Illustrative examples - Generalized theta graphs, grid}
Certain simple classes of graphs are easily categorized.  For example, on a tree $T$, Adversary is restricted to choosing the whole graph at every time step (due to the connectivity requirement).  Then a simple strategy such as ``all agents move to vertex $v$" leads to an Agents win.  Therefore $\text{BROADCAST}(T,k)$ is Agents win for all trees $T$ and all $k$.

On the other hand, Adversary has a simple winning strategy for $\text{BROADCAST}(C_m, 2)$ for all $m \geq 4$: Start with the knowledgeable agent at vertex $1$ and the ignorant agent at $\lfloor m/2 \rfloor$.  Choose the subgraph at time $t$ by removing the edge adjacent to the knowledgeable agent on the shortest path between the knowledgeable and ignorant agents (tiebreak at random).  This prevents the two agents from meeting, as if they were ever co-located, the timestep previous they would have been one edge apart, and Adversary would have removed that edge.

Further examples of strategies for grids and generalized theta graphs can be found in \cite{DGLM20}, \cite{DGLM26}, \cite{MRS25}, \cite{MRSarxiv}.  For completeness we restate some key theorems here.

\begin{theorem}[\cite{DGLM20}]
    \label{the:tree}
    For any tree $T$ and any $k$, $\text{BROADCAST}(T,k)$ is an Agents win.
\end{theorem}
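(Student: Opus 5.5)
The plan is to make the informal argument just before the statement rigorous: first show that Adversary has no real choice on a tree, then give Agents an explicit strategy and a time bound.

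\textbf{Step 1: Adversary's move is forced.} A connected spanning subgraph of a tree $T$ on $N$ vertices has $N$ vertices and is connected, so it has at least $N-1$ edges. Since $T$ itself has exactly $N-1$ edges, the only such subgraph is $T$. Hence $G_t = T$ for every $t$, and the game is effectively played on the static graph $T$.

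\textbf{Step 2: Agents' strategy.} Fix any vertex $v$ of $T$, for instance the initial location of the knowledgeable agent. Each agent moves one step along the unique path in $T$ from its current position toward $v$. Once it reaches $v$, it stays there. By Step 1 every edge of $T$ is present at every time, so each of these moves is legal. An agent starting at distance $d_i$ from $v$ therefore reaches $v$ at time $d_i$ and remains there. Let $D$ be the maximum of the $d_i$; note $D \le \operatorname{diam}(T)$. Then at time $D$ all $k$ agents are co-located at $v$.

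\textbf{Step 3: Everyone becomes knowledgeable.} The knowledgeable agent is knowledgeable at time $D-1$. By the rules of the game, all agents co-located with it at time $D$ become knowledgeable at time $D$. This gives an Agents win for every tree $T$ and every $k$, within $\operatorname{diam}(T)$ steps.

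\textbf{Main obstacle.} The only step needing care is Step 1, the edge-count argument showing Adversary cannot delete any edge. Everything after it is routine. One should also check the case where the knowledgeable agent meets others before time $D$; this is harmless, since knowledge is never lost. If $v$ is chosen as the knowledgeable agent's starting vertex, it simply stays put throughout.
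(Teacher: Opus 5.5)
Your proof is correct and takes the same approach as the paper, which justifies the theorem by noting that connectivity forces Adversary to choose all of $T$ at every step, so Agents win by having every agent walk to a common vertex $v$. Your edge-count argument for Step 1 and the explicit bound (at most the diameter of $T$) on the time to win simply make that sketch rigorous.
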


\begin{theorem}[\cite{DGLM20}]
\label{the:cycle}
For any cycle $C_m$, $m>4$, $\text{BROADCAST}(C_m, 2)$ is an Adversary win and $\text{BROADCAST}(C_m, k)$ $k>2$ is an Agents win.
\end{theorem}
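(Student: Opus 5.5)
The theorem has two parts, and I would prove them separately. The Adversary part verifies that the edge-removal strategy described just before the statement works. The Agents part exhibits an explicit ``pincer'' strategy. Both arguments rest on one basic fact: every connected spanning subgraph $G_t$ of $C_m$ is either $C_m$ itself or $C_m$ minus a single edge. So at each time step Adversary blocks at most one edge.

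\emph{Adversary wins $\text{BROADCAST}(C_m,2)$.} Let $u_t$ be the knowledgeable agent's vertex and $w_t$ the ignorant agent's vertex, and let $d_t$ be their cycle distance. Initially $d_0=\lfloor m/2\rfloor\ge 2$ since $m>4$. I would prove by induction that the agents are never co-located.
\begin{itemize}
\item If $d_{t-1}\ge 3$, two moves of length at most one each cannot bring them together.
\item If $d_{t-1}=2$, the shortest path $u,x,w$ is unique, because the other arc has length $m-2\ge 3$. The only way to meet is at $x$. Adversary has removed the edge $ux$, so the knowledgeable agent cannot reach $x$.
\item If $d_{t-1}=1$, the edge $uw$ is removed, so neither agent can step onto the other, and the two cannot swap. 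Their other neighbours are distinct because $m>4$.
\end{itemize}
When $m$ is even and there is a tie between the two arcs, $d=m/2\ge 3$, so the tie-breaking rule is irrelevant. This completes the induction.

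\emph{Agents win $\text{BROADCAST}(C_m,k)$ for $k\ge 3$.} Fix an orientation of the cycle. Take the knowledgeable agent $K$ and two ignorant agents $A$ and $B$; all other agents stay put for now.
\begin{itemize}
\item $K$ stays at its vertex. $A$ tries to step clockwise toward $K$ and $B$ tries to step counterclockwise toward $K$. Each agent moves whenever its edge is present and otherwise waits.
\item Since Adversary removes at most one edge per round, at most one of $A,B$ is blocked in any round. Hence the potential ``clockwise distance from $A$ to $K$ plus counterclockwise distance from $B$ to $K$'', which is at most $2m$, strictly decreases every round.
\item Within finitely many rounds, one of $A$ or $B$ reaches $K$ and becomes knowledgeable. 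We then have two knowledgeable agents at one vertex.
\end{itemize}

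After this, I would iterate. Suppose two knowledgeable agents sit together and some ignorant agent $I$ remains. Keep $I$ fixed; send one knowledgeable agent clockwise and the other counterclockwise toward $I$. By the same one-edge argument, the sum of their distances to $I$ strictly decreases each round. When one of them reaches $I$, that vertex again holds two knowledgeable agents, and we repeat with the next ignorant agent. Each phase is finite and there are finitely many ignorant agents, so all agents become knowledgeable.

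I expect the main obstacle to be the careful treatment of the small-distance cases in the Adversary argument. In particular, one must check that when $d=1$ the agents cannot end up at a common third vertex, and that the uniqueness of the shortest path at $d=2$ genuinely uses $m>4$. The Agents argument is a routine potential-function argument once the one-edge observation is in place.
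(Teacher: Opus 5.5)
Your proof is correct. The Adversary half uses the same strategy sketched in Section~2.1: delete the edge at the knowledgeable agent on a shortest path. Your case analysis, however, is more complete than the paper's one-line justification. That justification asserts that agents meeting at time $t$ must have been one edge apart at time $t-1$, so it overlooks the case $d_{t-1}=2$ in which both agents step onto the middle vertex. Your observation that the shortest path is unique when $m>4$ is exactly what handles this case. It also explains why the range $m\geq 4$ quoted in Section~2.1 is too generous: on $C_4$ the agents can always force a meeting.

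For the Agents half the paper gives no argument and only cites \cite{DGLM20}, so your pincer strategy with the sum-of-distances potential fills that gap. It works for every $m\geq 3$ and also gives an $O(km)$ bound on the length of play.

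One thing you should state explicitly is how $A$ and $B$ are labeled. Take $B$ to be the first ignorant agent met going clockwise from $K$, and $A$ the other one, so that their routes to $K$ lie on opposite sides of $K$. With the reverse labeling, $A$ and $B$ walk toward each other along the same arc. Once they are adjacent, they both need the same edge, so a single deletion blocks both forever, and your claim that at most one of them is blocked per round fails.
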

\begin{theorem}[\cite{DGLM20}]
\label{the:clique}
For any clique $K_m$,  $m>4$, $\text{BROADCAST}(K_m, k)$ is an Adversary win for $k<m-1$ and an Agents win for $k\geq m-1$.
\end{theorem}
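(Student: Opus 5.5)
The plan is to handle the two regimes separately: build an explicit spanning subgraph for Adversary when $k\le m-2$, and use a progress-measure argument for Agents when $k\ge m-1$. Throughout, let $K_t$ be the set of vertices carrying at least one knowledgeable agent, $I_t$ the set carrying at least one ignorant agent, and $F_t$ the set of unoccupied vertices.

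\emph{Adversary win for $k<m-1$.} Adversary places the $k$ agents on distinct vertices, so at the start $K_0\cap I_0=\emptyset$ and $|F_0|\ge m-k\ge 2$. I will maintain the invariant $K_t\cap I_t=\emptyset$. Given the current configuration, choose two distinct free vertices $a,b\in F$. Let $G_t$ consist of the following edges: a clique on $K\cup\{a\}$, a clique on $I\cup\{b\}$, the edge $ab$, and an edge from $b$ to every remaining free vertex. This is a connected spanning subgraph of $K_m$. In $G_t$, every knowledgeable agent can only reach $K\cup\{a\}$, and every ignorant agent can only reach $I\cup\{b\}$. These two sets are disjoint, so no ignorant agent is ever co-located with a knowledgeable one, and the invariant persists. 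Moreover, $K_{t+1}\cup I_{t+1}$ still occupies at most $k$ vertices, so two free vertices are again available. The game therefore never ends.

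\emph{Agents win for $k\ge m-1$.} The key observation is that when at most one vertex is free, connectivity of $G_t$ forces contact. Since $G_t$ is connected and $K,I$ are nonempty, there are only two possibilities. Either $G_t$ has an edge $uw$ with $u\in K$, $w\in I$, and then the ignorant agents at $w$ move to $u$. Or every $K$–$I$ path passes through the unique free vertex $v$, and then one knowledgeable agent and one ignorant agent adjacent to $v$ both move to $v$. In both cases the number of ignorant agents strictly drops.

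\emph{Main obstacle.} After a merge, two agents share a vertex, so the number of free vertices can grow, and the Adversary construction above threatens to apply. To handle this, I would show that Agents can always \emph{re-spread} the knowledgeable agents, using the lexicographic potential ($\#$ignorant agents, $\#$free vertices). Let $x\in K$ carry at least two agents. Because $G_t$ is connected, take a shortest $G_t$-path from $x$ to $F\cup I$. If the path hits $I$ first, and passes only through knowledgeable vertices, then shift every knowledgeable agent one step along the path simultaneously, with the spare copy at $x$ filling the vacated vertex. The ignorant endpoint then becomes co-located with a knowledgeable agent. If instead the path hits $F$ first, the same shift fills a free vertex without emptying any vertex. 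The path may also pass through $I$-vertices; there I would apply the direct merge at the first $K$–$I$ edge. Each round thus either reduces the number of ignorant agents or reduces the number of free vertices while keeping the number of ignorant agents fixed. Since $k\ge m-1$, the number of free vertices can be driven back down to at most one, where the contact argument above applies again. This potential is bounded, so eventually all agents are knowledgeable. I expect the careful case analysis of this shifting step to be the delicate part, in particular checking that simultaneous moves along the path never empty a vertex that must remain occupied.
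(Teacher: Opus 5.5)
Your proposal is correct. The paper gives no proof of this statement (it is quoted from \cite{DGLM20}), so your argument has to stand on its own, and it does. The Adversary half is complete as written. At most $k\le m-2$ vertices are ever occupied, so two free vertices $a,b$ always exist. Your graph is a connected spanning subgraph, and it confines knowledgeable agents to $K\cup\{a\}$ and ignorant ones to the disjoint set $I\cup\{b\}$. You should note that $k\ge 2$ is implicitly assumed, since for $k=1$ the game is over at once.

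For the Agents half, the step you flag as delicate is in fact immediate. A shortest $G_t$-path $x=p_0,p_1,\dots,p_r$ from $x$ to $F\cup I$ ends at its first vertex in $F\cup I$. Hence $p_1,\dots,p_{r-1}\in K$, and the case of the path passing through $I$-vertices never arises. Move one agent from each $p_i$ to $p_{i+1}$ for $0\le i<r$. This keeps every $p_i$ occupied: $x$ keeps its spare, and each interior vertex loses one agent and gains one. It also delivers a knowledgeable agent to $p_r$.

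What you should state explicitly is the invariant that no two ignorant agents ever share a vertex. It holds because your strategy only moves an ignorant agent onto a vertex where it immediately becomes knowledgeable. This invariant is what guarantees that the doubled vertex lies in $K$. If $N$ is the number of ignorant agents, then $|I|=N$, and $|F|\ge 2$ gives $|K|\le m-2-N<k-N$ because $k\ge m-1$. With this, the pair $(N,|F|)$ decreases lexicographically every round: a contact move when $|F|\le 1$, a shift when $|F|\ge 2$. So Agents win.
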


\begin{theorem}[\cite{MRS25}]
\label{the:alg-works}
    For a generalized theta graph $G = \theta_{d_1, d_2,...,d_{\ell}}$ with $\ell \geq 1$ paths, each of length $d_i\geq 1$, $\text{BROADCAST}(G, k)$ is an Adversary win for $k \leq \ell$ and Agents win for $k>\ell$.
\end{theorem}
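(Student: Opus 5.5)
We prove the two halves separately. Write $G=\theta_{d_1,\dots,d_\ell}$ as two poles $u,w$ joined by $\ell$ internally disjoint paths $P_1,\dots,P_\ell$ of lengths $d_i$; at most one $d_i$ equals $1$, since $G$ is simple. The key structural fact is that a connected spanning subgraph of $G$ can omit at most one edge from each path, and it can omit edges from at most $\ell-1$ paths in total: if every path lost an edge, $u$ and $w$ would be disconnected. Conversely, any choice of at most one edge on each of at most $\ell-1$ paths gives a legal $G_t$.

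\textbf{Adversary wins for $k\le\ell$.} It suffices to handle $k=\ell$, since extra agents only help Agents. The plan generalizes the cycle strategy used for Theorem~\ref{the:cycle}.
\begin{itemize}
\item Place the knowledgeable agent at $u$ (or at a pole-adjacent vertex if $d_i=1$ causes trouble), and spread the $\ell-1$ ignorant agents one per path, far from $u$ along their paths.
\item The invariant to maintain is that each ignorant agent sits on a path $P_i$, and Adversary cuts the edge of $P_i$ adjacent to the knowledgeable side on the shortest route between them. Because there are only $\ell-1$ ignorant agents, at most $\ell-1$ paths need a cut each round, so the cut pattern is legal.
\item The delicate part is that ignorant agents may move through the poles, and knowledgeable agents multiply once meetings occur. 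The cleaner invariant is therefore stated in terms of the whole knowledgeable set $K$: at every time, every edge whose traversal in the current round would create a meeting between $K$ and an ignorant agent is removed.
\item Since an ignorant agent occupies either an interior vertex of a path or a pole, we must show that all such ``dangerous'' edges lie on at most $\ell-1$ distinct paths with at most one per path. Maintaining this requires care in how the initial positions are chosen and in tie-breaking.
\end{itemize}
This is the step I expect to be the main obstacle. I would try to prove by induction on time that no meeting ever occurs, so $K$ remains a single agent. Then only pairs consisting of the knowledgeable agent and one ignorant agent can meet, and each ignorant agent contributes at most one dangerous edge. The remaining issue is when two dangerous edges land on the same path, or when the knowledgeable agent sits at a pole touching $\ell$ paths. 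I would handle this by showing Adversary can choose the setup so that the knowledgeable agent's distance to the ignorant agents prevents both simultaneously, with a counting argument using the fact that $k=\ell$ agents means at most $\ell-1$ threats.

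\textbf{Agents win for $k>\ell$.} Here we use pigeonhole over the cuts. In any $G_t$ at least one path is fully intact, but Adversary chooses which one. I would have all agents walk to pole $u$ and wait there. An agent on path $P_i$ is blocked only while $P_i$ is cut between it and $u$; if it is then cut on that side, it heads to $w$ instead, where it is free to use any intact path. Since $u$ and $w$ are joined by an intact path each round, I would have the agents instead converge to the set $\{u,w\}$ and then make a potential argument. Each round Adversary can block at most $\ell-1$ paths while $\ge\ell+1$ agents are moving, so the sum of distances to the nearest pole decreases, bounded by $\sum d_i$. Once all agents are at poles, the group at $u$ and the group at $w$ travel toward each other along intact paths, and Adversary cannot keep separating them forever, because meeting at interior vertices or poles only requires the two groups to share an intact path for $\max d_i$ consecutive steps. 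I would formalize this with the extra agent serving as a ``relay'' stationed to cover the one path Adversary leaves open.
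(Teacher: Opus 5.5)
The paper quotes this theorem from \cite{MRS25} without proof, so your proposal has to stand on its own. Your description of the legal $G_t$ is correct (at most one deleted edge per path, on at most $\ell-1$ paths), but neither half is actually proved.

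In the Adversary half, the step you defer is the whole argument, and a rule that only deletes edges dangerous in the current round cannot carry it. Nothing keeps the knowledgeable agent at $u$: some edge at $u$ always survives, so it can walk into the interior of a path $P$ and wait. Meanwhile, ignorant agents not yet in danger walk unhindered to both poles. For $\ell\ge 3$ there are two ignorant agents, and they can close in along $P$ from $u$ and from $w$ at once. The single edge of $P$ that may be deleted shields only one side, so the total approach distance drops every round until the knowledgeable agent needs shielding on both sides within $P$. That requires two deletions on $P$, which no legal $G_t$ allows. The pole case you worry about is harmless once paths are long, since there are only $\ell-1$ threats, one path each; the two-sided configuration is the real danger. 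Initial placement and tie-breaking cannot prevent it, because Agents move the knowledgeable agent too. What is missing is a global invariant, for instance controlling which poles the ignorant agents can still reach relative to the knowledgeable agent's path, together with a proof that $\ell-1$ cuts per round maintain it against every move.

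Two further points. First, the reduction to $k=\ell$ is not free: a strategy against $\ell$ agents does not obviously transfer to fewer agents (the agent it keeps ignorant may be an absent one), and at $k=1$ there is no ignorant agent at all. Second, the statement cannot hold for every choice of $d_i\ge 1$: $\theta_{1,2}=C_3$ and $\theta_{2,2}=C_4$ are Agents wins with two agents, consistent with the hypothesis $m>4$ in Theorem~\ref{the:cycle}. So your strategy must rest on an explicit length hypothesis, which ``far from $u$'' quietly assumes.

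In the Agents half, the potential argument is false: agents cannot be driven to $\{u,w\}$. Suppose an agent stands at an interior vertex $x_j$ of $P_i$ whose neighbour $x_{j+1}$ is also interior. Adversary deletes $x_{j-1}x_j$ whenever the agent is at $x_j$, and $x_{j+1}x_{j+2}$ whenever it is at $x_{j+1}$. Whatever the agent's rule, it stays in $\{x_j,x_{j+1}\}$ forever, at the cost of one cut on $P_i$ per round, and Adversary can do this on $\ell-1$ paths simultaneously. The second phase is also unsupported. Adversary re-chooses the intact $u$--$w$ path every round, so nothing forces two groups at opposite poles to share an intact path for $\max d_i$ consecutive rounds. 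Moreover, a group travelling together along one path is exactly what a single cut stops.

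What makes $\ell+1$ agents suffice is a configuration the restricted cut sets cannot answer. With the knowledgeable agent at $u$ and one ignorant agent on each of the $\ell$ paths walking toward $u$, some path is intact every round, so the total distance to $u$ strictly decreases until a meeting. Similarly, an interior vertex approached from both ends of its path cannot be shielded. A proof must show that Agents can reach such a configuration from Adversary's placement despite the pinning above, and then repeat the process as knowledge spreads. The ``relay'' remark supplies none of this.
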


From these few results it seems clear that who wins $\text{BROADCAST}(G,n)$ depends on $n$, and that it is not obviously related to the number of edges or edge connectivity of the graph.  See \cite{MRS25, MRSarxiv} for further discussion of BROADCAST with respect to edge density and connectivity; here we take a different approach and look at other structural graph concepts.
\section{Strategy-preserving graph constructions}
A natural question is, which graph operations preserve Agents or Adversary wins?  For example, are there certain subgraphs or products that determine Agents or Adversary wins?  The answer is yes, there are.  Here we give a list of useful observations and strategies for graph constructions.
\subsection{Adversary-win graph constructions}
The first few observations are about Adversary's ability to leverage strategies on subgraphs.  For example, when there is a winning strategy on a connected spanning subgraph, Adversary can win by first reducing the playing field to that subgraph.  Note:  This lemma is tacitly used in the multidimensional grid cases of \cite{DGLM20} but is not stated or proved there; it is equivalent to Observation 3.3 in \cite{DGLM26}.

\begin{lemma}
    If $\text{BROADCAST}(G,k)$ is an Adversary win, then $\text{BROADCAST(H,k)}$ is also Adversary win for any $H$ such that $V(G)=V(H)$ and $E(G)\subseteq E(H)$.
\end{lemma}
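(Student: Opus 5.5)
We would prove this by strategy transfer: Adversary on $H$ simply plays its winning strategy for $\text{BROADCAST}(G,k)$. The point is that every legal Adversary move in $G$ is also a legal move in $H$. Since $V(G)=V(H)$ and $E(G)\subseteq E(H)$, a connected spanning subgraph $G_t$ of $G$ is also a connected spanning subgraph of $H$. It has the same vertex set, its edges lie in $E(G)\subseteq E(H)$, and connectivity is a property of $G_t$ alone.

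Concretely, fix a winning Adversary strategy $\sigma$ for $\text{BROADCAST}(G,k)$. In $\text{BROADCAST}(H,k)$, Adversary places the $k$ agents (one knowledgeable, $k-1$ ignorant) on the same $k$ distinct vertices that $\sigma$ prescribes; this is legal because $V(H)=V(G)$. At each time $t$, Adversary looks at the history of agent positions so far and selects the subgraph $G_t$ that $\sigma$ would select given that history in $G$. By the observation above, this is a legal move in $H$.

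The remaining step is to check that every play of the game on $H$ under this strategy is also a legal play on $G$ under $\sigma$. At time $t$, each agent either stays put or moves along an edge of $G_t$. Since $G_t$ is a subgraph of $G$, the same move is legal in the $G$-game facing $G_t$. The rule for becoming knowledgeable depends only on co-location, not on the ambient graph. By induction on $t$, the positions and knowledge states of the agents in the $H$-game coincide with those of a legal run of the $G$-game played against $\sigma$. Since $\sigma$ is winning, no such run ever has all agents knowledgeable, so the same holds in $H$.

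We do not expect a real obstacle. The only subtle point is being precise that $\sigma$ may depend on the entire history, and that the history produced in $H$ is a valid history in $G$. This holds because the agents' options in $H$ at each step are exactly their options in $G$: both are determined by $G_t$. The inductive step above makes this precise.
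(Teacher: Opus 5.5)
Your proposal is correct and matches the paper's argument: the paper's Adversary on $H$ discards the edges of $E(H)\setminus E(G)$ and follows the winning $G$-strategy, which is your observation that every connected spanning subgraph of $G$ is a legal move in $H$. Your version additionally spells out the history-dependence of $\sigma$ and the induction showing that each play on $H$ is a legal play on $G$ against $\sigma$, which the paper leaves implicit.
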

\begin{proof}: If an Adversary strategy exists on $G$ for $k$ agents, the Adversary strategy for $H$ is to remove all edges in $E(H)\setminus E(G)$ and then follow the strategy for $G$.
\end{proof}
By using the Adversary-win Theorems \ref{the:cycle} and \ref{the:alg-works}, we can give explicit definitions of infinite families of Adversary-win graphs.

\begin{corollary}[Theorem~\ref{the:cycle}]
    $\text{BROADCAST}(G,k)$ is an Adversary win for any Hamiltonian graph $G$ on $\geq 5$ vertices and $k\leq 2$.
\end{corollary}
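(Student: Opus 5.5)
The plan is to reduce to the cycle case through the preceding lemma. Let $G$ be Hamiltonian on $m\geq 5$ vertices, and fix a Hamiltonian cycle $C$ in $G$. Then $C$ is a spanning subgraph of $G$ isomorphic to $C_m$: it has $V(C)=V(G)$ and $E(C)\subseteq E(G)$. By Theorem~\ref{the:cycle}, since $m>4$, $\text{BROADCAST}(C_m,2)$ is an Adversary win. Applying the lemma with $C$ in the role of the smaller graph and $G$ in the role of $H$ then gives that $\text{BROADCAST}(G,2)$ is an Adversary win.

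Concretely, the Adversary strategy on $G$ is as follows. At every time step Adversary deletes all chords $E(G)\setminus E(C)$. Among the remaining cycle edges, it deletes the one incident to the knowledgeable agent that lies on a shortest path to the ignorant agent, exactly as in the $C_m$ strategy described earlier in this section. The resulting $G_t$ is $C$ minus one edge, which is a Hamiltonian path. It is therefore a connected spanning subgraph of $G$, so every move is legal. The agents' positions are thus always played on $C$ against a winning cycle strategy, and they never meet.

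The only point needing care is the small values of $k$. The hypothesis $m\geq 5$ is exactly what Theorem~\ref{the:cycle} requires; on $C_4$ the two agents can be forced adjacent-or-equal. For $k=1$ the game has no ignorant agent, so there is nothing to broadcast. I would note that the substantive content of the corollary is the case $k=2$ and treat $k=1$ as the degenerate case under the paper's convention. Otherwise no step is a real obstacle, since the argument is a direct composition of the lemma with Theorem~\ref{the:cycle}.
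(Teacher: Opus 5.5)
Your proposal is correct and follows the paper's own route: the corollary comes from applying the spanning-subgraph lemma to a Hamiltonian cycle $C\cong C_m$ and invoking Theorem~\ref{the:cycle}, and your explicit strategy (delete all chords, then play the $C_m$ strategy on $C$) is that lemma unpacked. One correction: for $k=1$ there is no ignorant agent, so under the game's rules Agents win immediately, and no convention in the paper changes this. You should therefore say plainly that the statement holds only for $k=2$, which is also the only case Theorem~\ref{the:cycle} supports.
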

\begin{corollary}[Theorem~\ref{the:alg-works}]
   $\text{BROADCAST}(G,k)$ is an Adversary win for any graph with a spanning generalized theta subgraph with $\ell\geq k$ paths.
\end{corollary}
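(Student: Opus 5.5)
The corollary follows by combining Theorem~\ref{the:alg-works} with the preceding lemma on spanning subgraphs. Fix a graph $G$ with a spanning generalized theta subgraph $H=\theta_{d_1,\dots,d_\ell}$, where $\ell\geq k$. Since $H$ is spanning, we have $V(H)=V(G)$ and $E(H)\subseteq E(G)$. By Theorem~\ref{the:alg-works}, $\text{BROADCAST}(H,k)$ is an Adversary win whenever $k\leq \ell$, which is exactly our hypothesis. Applying the lemma with $H$ in the role of the smaller graph and $G$ in the role of the larger graph then shows that $\text{BROADCAST}(G,k)$ is an Adversary win.

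For completeness, the Adversary strategy can be described directly. At every time step $t$, Adversary first deletes all edges of $E(G)\setminus E(H)$, and then picks the connected spanning subgraph $H_t$ of $H$ prescribed by the winning strategy from Theorem~\ref{the:alg-works}. The initial placement of the agents is the same as in that strategy. Each $H_t$ is a connected spanning subgraph of $G$ as well, because $H$ spans $G$, so every choice is a legal move in the game on $G$. Moreover, the agents' possible moves are determined entirely by $H_t$, so the play is identical to a play of $\text{BROADCAST}(H,k)$, and by Theorem~\ref{the:alg-works} the agents never all become knowledgeable.

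The only point requiring care is the meaning of $\ell$. Theorem~\ref{the:alg-works} counts the internally disjoint paths between the two branch vertices, and the statement requires $\ell\geq k$ for this same $\ell$. Degenerate cases, such as $\ell=1$ where the theta graph is a single path (a tree), are already covered by the theorem as stated: there the hypothesis forces $k\leq 1$, and with at most one agent there is only the knowledgeable agent. We follow the theorem's conventions and do not re-examine these cases separately.
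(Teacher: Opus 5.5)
Your proposal is correct and matches the paper's intended argument: apply the spanning-subgraph lemma with the theta subgraph as the smaller graph, then invoke Theorem~\ref{the:alg-works}. Your closing paragraph on degenerate cases is unnecessary, since the reduction simply inherits whatever hypotheses the cited theorem needs, and it wrongly calls $k=1$ \emph{covered}: with a single agent the game is trivially an Agents win, so that case is a flaw in the cited statement, not something your reduction has to handle.
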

We now move on to more sophisticated constructions. Because Adversary chooses the initial placement of agents, certain types of constructions can ``generalize" a winning Adversary strategy to a larger class of graphs.  The first theorem of this type gives an infinitely family of Adversary-win graphs for any number of Agents.
\begin{theorem}
    For any $k$, if there exists some graph $G$ such that $\text{BROADCAST}(G,k)$ is an Adversary win, then there are infinitely many graphs $H$ for which $\text{BROADCAST}(H,k)$ is an Adversary win.
\end{theorem}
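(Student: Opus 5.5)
The plan is to build, from one Adversary-win graph $G$ for $\text{BROADCAST}(G,k)$, an infinite family by attaching extra structure that Adversary can ignore. The construction I would use is: for each $m\geq 1$, let $H_m$ be $G$ with a path (a pendant tree) of length $m$ attached at a single vertex $v\in V(G)$. More generally, any graph obtained from $G$ by gluing a tree at a cut vertex would work, but pendant paths already give infinitely many pairwise non-isomorphic graphs, since their vertex counts all differ.

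Adversary's strategy on $H_m$ mirrors its winning strategy on $G$. For the setup, Adversary places all $k$ agents exactly where the $G$-strategy places them, so every agent starts inside $V(G)$. At each time $t$, Adversary chooses $G_t$ according to the $G$-strategy and plays $(H_m)_t = G_t \cup P$, where $P$ is the pendant path. This graph is a connected spanning subgraph of $H_m$, because $G_t$ spans $G$ and is connected, and $P$ is attached at $v$.

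The key step is to show that agents gain nothing by entering the pendant path. I would define a projection $\pi: V(H_m)\to V(G)$ that is the identity on $V(G)$ and sends every vertex of $P$ to $v$. Any legal move in $(H_m)_t$ projects to a legal move (or a stay) in $G_t$: an edge of $P$ projects either to a stay at $v$ or is an edge from $v$ into $P$, which also projects to a stay. Whenever two agents are co-located in $H_m$, their projections are co-located in $G$. So a sequence of Agents moves in $H_m$ that makes everyone knowledgeable yields a sequence of projected moves in $G$, against the same Adversary choices, in which every actual meeting is also a meeting of projections. By induction on $t$, the set of knowledgeable agents in $H_m$ is contained in the set of knowledgeable projected agents in $G$. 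Hence an Agents win on $H_m$ would give an Agents win against the winning Adversary strategy on $G$, which is a contradiction.

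The main obstacle is this simulation step. Projected agents may meet more often than the original agents, and that only helps them, so the containment goes in the right direction. The real subtlety is that Adversary's $G$-strategy may be adaptive, reacting to agent positions. To handle this, Adversary in $H_m$ feeds the \emph{projected} positions into its $G$-strategy; since those projections form a legal play in $G$, the strategy's guarantee applies to them. With that in place, the theorem follows for every $m$, giving infinitely many graphs $H$.
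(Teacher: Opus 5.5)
Your proof is correct and uses the same idea as the paper. The paper glues an arbitrary graph $H'$ to $G$ at a vertex $v$ (it takes $v$ to be the knowledgeable agent's starting vertex) and has Adversary treat every agent in $H-G$ as if it were at $v$; this is exactly your projection $\pi$, with $H'$ specialized to a pendant path. Your simulation argument is a more careful version of what the paper states informally: you run the $G$-strategy on the projected play and prove by induction that the real knowledgeable set stays inside the projected one, which also shows that the choice of $v$ does not matter.
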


\begin{proof}
    Let $H'$ be any arbitrary graph.  Let $v$ be the vertex in $G$ on which Adversary would have placed the knowledgeable agent.  Let $H$ be some graph generated by identifying a vertex in $H'$ with the vertex $v \in G$; that is, the sum of $H'$ and $G$ at $v$.
    
    Adversary's winning strategy on $H$ is a modification of its winning strategy on $G$.  The initial placement of the $k$ agents in $H$ is the same as in $G$.  Adversary follows its original strategy for $G$ when agents are located in $G \subseteq H$.  If any agents are in $H-G$, Adversary acts as if they are located on $v$.  
    
    In particular, then, Adversary prevents the last ignorant agent from reaching $v$ when a knowledgeable agent is located on $v$ or in $H-G$, and vice-versa.  Therefore the last ignorant and any knowledgeable agent will not coincide on $G\subseteq H$ (guaranteed by Adversary's original strategy), nor will they coincide in $H-G$ (guaranteed by Adversary's prevention of a second agent passing through $v$).  Observing that there are infinitely many potential $H$ arising from this construction completes the proof.
\end{proof}

The Adversary in the previous theorem utilized the fact that $v$ is a cut-vertex between two parts of the graph to develop a winning strategy on the larger graph.  We can generalize this strategy to subgraphs as well.

\begin{theorem}\label{cutvx}
    Let $H$ be a graph with a unique cut-vertex $v$.  Let $G_1, \ldots G_s$ be the connected components of $H-v$.  Then for any $k$, if there exists $i$ and $k$ such that $\text{BROADCAST}(G_i\cup \{v\},k)$ is an Adversary win, $\text{BROADCAST}(H,k)$ is also an Adversary win.
\end{theorem}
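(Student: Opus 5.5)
The plan is to transplant Adversary's winning strategy on $G' := G_i\cup\{v\}$ to $H$. Outside $G'$, every agent will be treated as if it were sitting at $v$, exactly as in the proof of the preceding theorem. Here $G'$ means the subgraph of $H$ induced on $V(G_i)\cup\{v\}$. Since $v$ is a cut-vertex, every path from $G_i$ to another component $G_j$ passes through $v$. Since $H$ is connected and $v$ is the only cut-vertex, $G'$ is connected, so $\text{BROADCAST}(G',k)$ makes sense.

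\textbf{Setup.} Adversary places all $k$ agents inside $G'$, exactly as its winning strategy on $G'$ prescribes.

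\textbf{Choice of $H_t$.} Define the projection $\pi: V(H)\to V(G')$ by $\pi(u)=u$ for $u\in V(G')$ and $\pi(u)=v$ otherwise. At time $t$, Adversary feeds the projected configuration $\pi(\text{positions})$ into the $G'$-strategy and receives a connected spanning subgraph $G'_t$ of $G'$. It then plays
\[
H_t = G'_t \cup \bigl(H - E(G')\bigr),
\]
that is, it keeps every edge outside $G'$. The graph $H_t$ is connected and spanning: $G'_t$ spans $G'$, and every other component $G_j\cup\{v\}$ is left intact and contains $v$.

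\textbf{Legality of projected moves.} I will check that the projected agent trajectories are legal moves in the $G'$ game.
\begin{itemize}
\item A move inside $G'$ along an edge of $H_t$ is an edge of $G'_t$.
\item A move from $v$ into some $G_j$ with $j\neq i$ projects to staying at $v$.
\item A move inside $G_j$ projects to staying at $v$.
\item A move from $G_j$ to $v$ projects to staying at $v$.
\end{itemize}
Direct moves from $G_j$ to $G_i$ are impossible, because $v$ is a cut-vertex. Hence the projected play is a legitimate play of $\text{BROADCAST}(G',k)$ against Adversary's winning strategy.

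\textbf{The main obstacle: knowledge transfer.} In $H$, two agents can meet inside some $G_j$, $j\neq i$, without meeting in the projection. Conversely, agents projected onto $v$ appear co-located even when they are not. I will show that the projected game's knowledge set always contains the true knowledge set. Whenever two agents meet in $H$, their projections also coincide, so any real transfer is mirrored in the projection. Extra "phantom" transfers in the projection only make the projected Agents stronger. The projected game is therefore at least as favourable to Agents as the real one.

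Since Adversary's strategy on $G'$ keeps some agent ignorant forever, even against these strengthened projected Agents, that agent is also ignorant forever in $H$. To make this rigorous, I need the $G'$ strategy to win against every Agents play, including plays in which extra agents are declared knowledgeable. This holds because the phantom-augmented play is simply some sequence of agent moves in $G'$ under the actual rules: agents projected to $v$ really are co-located at $v$ in the projected game, so the rules themselves make them knowledgeable.

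Thus the projected play is an honest play of $\text{BROADCAST}(G',k)$, and the containment of knowledge sets follows by induction on $t$. This completes the plan.
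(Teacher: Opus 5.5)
Your proposal is correct and follows the same approach as the paper: run Adversary's winning strategy on $G_i\cup\{v\}$, treat every agent outside it as if it were at $v$, and leave the rest of $H$ intact. Your explicit legality check for projected moves, and the induction showing the real knowledge set stays inside the projected one (so phantom transfers at $v$ only help the projected Agents), fill in details that the paper's short proof leaves implicit.
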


\begin{proof}
    Adversary's strategy on $H$ is simply to follow its strategy on $G_i \cup \{v\}$, and treat all agents in $H-G_i-v$ as if they were on $v$.  Because Adversary has a winning strategy on $G_i \cup \{v\}$, at no time will a knowledgeable and the last ignorant agent be able to meet in $H-G_i-v$ (as then they would have met at $v$ in $G_i \cup \{v\}$).  Similarly, at no time will a knowledgeable and the last ignorant agent meet in $G_i \cup \{v\}$.  Therefore Adversary wins.
\end{proof}
For example, we now have the following:
\begin{corollary}[Theorem~\ref{the:clique}]
    BROADCAST(G, k) on any graph $G$ with a unique cut-vertex and $\omega(G)=r$ is an Adversary win for $k<r$.
\end{corollary}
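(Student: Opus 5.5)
The plan is to reduce the corollary to Theorem~\ref{cutvx} combined with Theorem~\ref{the:clique}. Let $v$ be the unique cut-vertex of $G$ and $G_1,\ldots,G_s$ the components of $G-v$. Fix a maximum clique $K$ of $G$, so $|K|=r$.

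\textbf{Step 1: the clique lies in one piece.} Any two vertices of $K$ are adjacent. Two vertices lying in different components of $G-v$ are never adjacent. Hence $K\setminus\{v\}$ lies in a single component $G_i$, and therefore $K\subseteq G_i\cup\{v\}$.

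\textbf{Step 2: an Adversary win on $G_i\cup\{v\}$.} By Theorem~\ref{cutvx}, it suffices to show that $\text{BROADCAST}(G_i\cup\{v\},k)$ is an Adversary win.
\begin{itemize}
\item If $G_i\cup\{v\}$ is exactly the clique $K\cong K_r$, this follows directly from Theorem~\ref{the:clique}.
\item Otherwise $K$ is not spanning, so the spanning-subgraph lemma does not apply directly, and we need a different argument.
\end{itemize}
In the second case, mimic the proofs of the previous two theorems. Adversary places the agents on vertices of $K$ exactly as in its $K_r$ strategy. It keeps every edge outside $K$ available only along a fixed spanning forest $F$ of $G_i\cup\{v\}$ that attaches each non-clique vertex to $K$ through a chosen ``root'' clique vertex. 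Any agent off $K$ is then treated as sitting at its root. The aim is that, in every round, the graph Adversary presents is $F$ together with the connected spanning subgraph of $K$ prescribed by the clique strategy. This graph is connected. An agent can re-enter $K$ only through its root, so its projected position moves like a legal move in $K_r$. A meeting off $K$ would force a meeting of projections at a root, which the clique strategy forbids.

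\textbf{Step 3: parameters.} Theorem~\ref{the:clique} only guarantees an Adversary win for $m>4$ and $k<m-1$. So with $m=r$, the argument as planned delivers the statement for $r\geq 5$ and $k\leq r-2$. I expect that the intended reading of ``$k<r$'' must be adjusted in this way, or else the boundary case $k=r-1$ needs separate treatment. I would check the case $k=r-1$ against the clique theorem and, if it fails there, record the sharper hypothesis.

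\textbf{Main obstacle.} The hardest step is Step 2 when $G_i\cup\{v\}$ is larger than the clique. The projection onto roots has to be consistent with the clique strategy. The trouble is that an agent on a tree branch hanging from root $u$ may sit at $u$'s projection for a long time, and then several agents can share a projected vertex. The $K_r$ strategy must therefore tolerate co-located ignorant agents (or co-located knowledgeable agents), with only a knowledgeable/ignorant meeting forbidden. I would first verify that the Theorem~\ref{the:clique} strategy is of this form, exactly as was implicitly used in the proof of Theorem~\ref{cutvx}. If not, I would fall back on proving the corollary only for graphs whose relevant block is the clique itself.
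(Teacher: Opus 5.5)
Your outline follows the route the paper intends. The paper gives no argument beyond placing this corollary after Theorem~\ref{cutvx}: a maximum clique lies in one block $G_i\cup\{v\}$, Theorem~\ref{the:clique} makes that block an Adversary win, and Theorem~\ref{cutvx} lifts this to $G$. The middle step is exactly the one you flagged. The spanning-subgraph lemma only transfers a win from a \emph{spanning} subgraph, and the paper does not justify this step when $G_i\cup\{v\}\neq K_r$. Your forest projection fills that gap, and it is correct.

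The ``main obstacle'' is not a real one. Agents may already stack on a vertex in the $K_r$ game itself. The projected trajectories also form a legal play there, since each projection either stays put or crosses an edge of the clique subgraph Adversary chose. So any winning $K_r$ strategy already handles stacking. If that strategy tolerates some non-final meetings, use monotonicity: a real meeting is always a projected meeting, so the really knowledgeable agents are always a subset of the projection-knowledgeable ones, and that suffices.

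More simply, use an explicit strategy. With at most $k\le r-2$ occupied projections, choose unoccupied clique vertices $e_1\neq e_2$. Join $e_1$ to every knowledgeable-occupied clique vertex and to every other unoccupied one, join $e_2$ to every ignorant-occupied one, and add $e_1e_2$. Knowledgeable projections can then only enter $e_1$ and ignorant ones only $e_2$. So no meeting ever occurs, whether or not agents stack.

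Your suspicion about the parameters is also right, and the restriction is necessary, not an artifact of your method. Let $G$ be $K_r$ with one pendant edge attached. Then $G$ has a unique cut-vertex and $\omega(G)=r$. Yet by Theorem~\ref{contract}, $\text{BROADCAST}(G,r-1)$ is an Agents win because $\text{BROADCAST}(K_r,r-1)$ is (Theorem~\ref{the:clique}, $r>4$). So the corollary as stated is false at $k=r-1$. The true version is $2\le k\le r-2$; note that $k=1$ is a trivial Agents win, since there is nobody to inform. The explicit strategy above covers every $r\ge 4$, so the $r\ge5$ inherited from Theorem~\ref{the:clique} can be dropped.

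Finally, notice what your approach buys over the paper's: the projection never uses the cut-vertex. Building the forest on all of $G$, rather than on $G_i\cup\{v\}$, shows that every connected graph with $\omega(G)\ge k+2$ is an Adversary win for $k\ge 2$. So Theorem~\ref{cutvx} is not needed at all.
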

\begin{corollary}[Theorem~\ref{the:cycle}]
    BROADCAST(G,k) on any graph $G$ with a unique cut-vertex and longest cycle length $r$ is an Adversary win for $k\leq 2$ and $r\geq 5$.
\end{corollary}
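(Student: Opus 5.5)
The plan is to reduce to Theorem~\ref{cutvx}, feeding in the relevant earlier result about Adversary wins. The statement concerns a graph $G$ with a unique cut-vertex $v$ and longest cycle of length $r\geq 5$.

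First, fix a longest cycle $C$ of $G$, so $|C|=r\geq 5$. A cycle is $2$-connected, so it cannot be separated by deleting $v$. Hence $C-v$ lies inside a single connected component $G_i$ of $G-v$, and $C$ is a subgraph of $G_i\cup\{v\}$. (If $v\notin C$, then $C\subseteq G_i$ outright.)

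Second, show that $\text{BROADCAST}(G_i\cup\{v\},k)$ is an Adversary win for $k\leq 2$. Here I would not rely on the spanning-subgraph lemma. The cycle $C$ need not span $G_i\cup\{v\}$, so the Hamiltonian corollary does not apply directly. This is the main obstacle. I would try to handle it by running the cut-vertex idea again inside $G_i\cup\{v\}$: place the agents on $C$ as in the cycle strategy of Theorem~\ref{the:cycle}, and have Adversary treat any agent that leaves $C$ as if it sat at the point where it left. That point needs to be well defined, which is the difficulty. In general $C$ is not attached to the rest of $G_i\cup\{v\}$ through cut-vertices, and a chord or an outside path could let the two agents shortcut around the removed edge.

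The fallback is to read the hypothesis as intended for blocks in which the cycle plays the role of a Hamiltonian cycle of the component. Concretely, assume $C$ spans $G_i\cup\{v\}$, or choose $i$ so that this holds. Then the Hamiltonian corollary (from Theorem~\ref{the:cycle}) gives the Adversary win on $G_i\cup\{v\}$ for $k\leq 2$, since that graph has at least $r\geq 5$ vertices. Otherwise I would try to find a Hamiltonian subgraph on $\geq 5$ vertices hanging off $v$, to which the construction behind Theorem~\ref{cutvx} applies.

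Finally, apply Theorem~\ref{cutvx} with this $i$ to conclude that $\text{BROADCAST}(G,k)$ is an Adversary win for $k\leq 2$. For $k=1$ the game is trivially won by Agents, since the lone agent is already knowledgeable. So the substantive case is $k=2$; the case $k=1$ should be dismissed or treated as degenerate.
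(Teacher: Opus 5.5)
Your first step is fine: the longest cycle $C$ lies in a single block $B=G_i\cup\{v\}$. You are also right that $k=1$ is degenerate. The gap is the second step. You correctly flag it as the obstacle but do not overcome it: your fallback simply assumes that $C$ spans $B$, or that some Hamiltonian block on at least $5$ vertices hangs off $v$. That is an extra hypothesis, and you cannot arrange it by choosing $i$. For instance, the Petersen graph with a pendant edge attached has a unique cut-vertex and longest cycle of length $9$. Yet its only nontrivial block is non-Hamiltonian, so the Hamiltonian corollary your fallback relies on does not apply. The paper's proof is only an attribution to Theorem~\ref{the:cycle} placed after Theorem~\ref{cutvx}; it glosses over this same point, so you cannot lean on it here.

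The missing idea is a direct argument that $\text{BROADCAST}(B,2)$ is an Adversary win for every 2-connected $B$ containing a cycle of length $r\geq 5$, with no spanning assumption.

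\textbf{Adversary's step.} With the two agents at distinct vertices $a,b$, Adversary only needs a connected spanning subgraph in which $d(a,b)\geq 3$. Each agent moves at most one step, so they stay distinct, and the argument repeats. Such a subgraph exists as soon as $B$ has an $a$--$b$ path $P$ of length at least $3$: extend $P$ to a spanning tree $T$ of $B$, and then $d_T(a,b)=|P|$.

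\textbf{Existence of the long path.} In a 2-connected $B$ such a path always exists. By Menger's theorem there are two vertex-disjoint paths from $a$ and from $b$ to distinct vertices $u,w$ of $C$. Each meets $C$ only at its endpoint, and either may be trivial. Joining them through the longer $u$--$w$ arc of $C$ gives an $a$--$b$ path of length at least $r/2>2$.

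\textbf{Conclusion.} You need $B=G_i\cup\{v\}$ to be 2-connected. This follows from $v$ being the unique cut-vertex, since a cut-vertex $u\neq v$ of $B$ would also be a cut-vertex of $G$. Hence Adversary wins on $B$ from every placement, and Theorem~\ref{cutvx} transfers the win to $G$.
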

\begin{corollary}[Theorem~\ref{the:alg-works}]
     BROADCAST(G,k) on any graph $G$ with a unique cut-vertex and containing a generalized theta graph with $\ell$ paths is an Adversary win for $k\leq \ell$.
\end{corollary}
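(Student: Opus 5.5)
The statement to prove is the last corollary: if $G$ has a unique cut-vertex and contains a generalized theta graph with $\ell$ paths, then BROADCAST$(G,k)$ is an Adversary win for $k\le \ell$. I would combine Theorem~\ref{the:alg-works}, the spanning-subgraph lemma, and Theorem~\ref{cutvx}. Let $v$ be the unique cut-vertex and $G_1,\ldots,G_s$ the components of $G-v$. Let $\Theta=\theta_{d_1,\ldots,d_\ell}$ be the theta subgraph in $G$.

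\textbf{Step 1: locate the theta graph in a single block.} A generalized theta graph with $\ell\ge 2$ paths is $2$-connected, so it has no cut-vertex. Hence $\Theta-v$ is connected, and therefore $\Theta\subseteq G_i\cup\{v\}$ for some $i$. If $\ell=1$, then $\Theta$ is a single path, and the claim for $k\le 1$ is trivial, since the game is decided at setup. I would dispose of that degenerate case first.

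\textbf{Step 2: apply Theorem~\ref{the:alg-works} and the lemma on $G_i\cup\{v\}$.} If $\Theta$ is a spanning subgraph of $G_i\cup\{v\}$, then Theorem~\ref{the:alg-works} makes BROADCAST$(\Theta,k)$ an Adversary win for $k\le\ell$. The lemma on supergraphs with the same vertex set then lifts this to $G_i\cup\{v\}$. Finally, Theorem~\ref{cutvx} transfers the Adversary win to $G$.

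\textbf{Main obstacle: $\Theta$ need not span $G_i\cup\{v\}$.} The paper's lemma only handles spanning subgraphs, so I need a version for non-spanning ones. I would mimic the proofs of the earlier theorems by projecting every vertex outside $\Theta$ onto $\Theta$, so that agents off $\Theta$ are treated as if they sat on $\Theta$. Concretely:
\begin{itemize}
\item Extend $\Theta$ to a connected spanning subgraph $S$ of $G_i\cup\{v\}$ by attaching trees that hang off $\Theta$ (for example, via BFS from $\Theta$). Adversary deletes all edges outside $S$ forever.
\item Each hanging tree $T$ meets $\Theta$ in exactly one root $r_T$. Adversary treats any agent in $T$ as if it were at $r_T$, and places all agents on $\Theta$ according to its winning placement there.
\item Agents moving within a tree correspond to waiting at $r_T$, and entering or leaving a tree corresponds to staying at $r_T$. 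So the projected play is a legal play on $\Theta$. Adversary's moves are legal because its connected spanning choices on $\Theta$, together with the trees, remain connected and spanning.
\item A meeting in some tree $T$ would project to a meeting at $r_T$, which Adversary's strategy on $\Theta$ already forbids. Meetings on $\Theta$ itself are forbidden directly.
\end{itemize}
This is exactly the same argument as in Theorem~\ref{cutvx}, applied to each root $r_T$ as a cut-vertex of $S$. Once this step is established, Theorem~\ref{cutvx} finishes the proof. It may even be more direct to apply the tree-projection argument to all of $G$ at once, which would bypass the unique-cut-vertex hypothesis entirely.
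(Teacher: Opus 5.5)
The paper gives no separate proof of this corollary. It is meant to drop out of Theorem~\ref{cutvx} combined with Theorem~\ref{the:alg-works}, with the spanning-subgraph lemma used tacitly, and that is your Steps~1--2. The difference is that you notice this chain only closes when $\Theta$ spans the block $G_i\cup\{v\}$. The lemma needs equal vertex sets, and Theorem~\ref{cutvx} cannot be iterated over several attachment points because it assumes a \emph{unique} cut-vertex. You then supply the missing step.

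Your projection is sound: freeze $S=\Theta\cup F$, where $F$ is a forest hanging off $V(\Theta)$ (e.g.\ from multi-source BFS), and send every vertex of a hanging tree to its root. It is strictly stronger than what the paper uses. For any connected subgraph $K$ of a connected $H$, an Adversary win for $\text{BROADCAST}(K,k)$ gives an Adversary win for $\text{BROADCAST}(H,k)$. That one lemma subsumes the spanning-subgraph lemma, the vertex-sum construction and Theorem~\ref{cutvx}. So your closing remark is right: Step~1 and the unique-cut-vertex hypothesis can be dropped.

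Two points need tightening.

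\textbf{Projected knowledge.} Adversary must feed its $\Theta$-strategy the knowledge state of the \emph{projected} game. That means applying the co-location rule to projected positions, so two agents anywhere in the same hanging tree $T$ count as meeting at $r_T$. Only then is the simulated history a legal play on $\Theta$, so the winning guarantee applies. An induction then gives real knowledge $\subseteq$ projected knowledge, so the agent that stays ignorant in the projection stays ignorant in $G$. Your claim that the $\Theta$-strategy ``forbids'' the projected meeting should be read this way. That strategy does not forbid meetings in general; it only ensures some agent never becomes knowledgeable in the projected game.

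\textbf{The case $\ell=1$.} Your disposal of this case points the wrong way. With $k=1$ the lone agent is knowledgeable at setup, so every agent is knowledgeable. The game is then a trivial Agents win (consistent with Theorem~\ref{the:tree}), not an Adversary win. The same degeneracy occurs for $k=1$ and every $\ell$, and it is inherited from Theorem~\ref{the:alg-works} as quoted. The statement should be read for $2\le k\le\ell$, which forces $\ell\ge 2$ and makes your special case vacuous rather than trivial.
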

\subsection{Agents-win constructions}
Once cut vertices' impacts are understood, it is natural to consider cut edges.  While cut vertices were useful in generating Adversary strategies, cut edges are helpful to Agents.  Because Adversary's moves are restricted on cut edges, Agents can build a strategy around them.

\begin{theorem}\label{contract}
    Let $G$ be some connected graph and $H$ be a graph derived from $G$ by contracting along some or all of $G$'s cut-edges.  Then $\text{BROADCAST}(H,k)$ is an Agents win if and only if $\text{BROADCAST}(G,k)$ is an Agents win.
\end{theorem}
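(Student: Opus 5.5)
The plan is to reduce to a single cut-edge and then induct. Contracting a cut-edge $e=uw$ of $G$ yields a graph $G/e$ in which every other cut-edge of $G$ is still a cut-edge. Contracting a set of cut-edges can therefore be done one edge at a time. It suffices to prove that $\text{BROADCAST}(G,k)$ is an Agents win if and only if $\text{BROADCAST}(G/e,k)$ is, for a single cut-edge $e=uw$ contracted to a vertex $x$.

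The basic fact is that since $e$ is a cut-edge, it lies in every connected spanning subgraph $G_t$ that Adversary may select. Also, $G_t$ is connected and spanning if and only if $G_t/e$ is a connected spanning subgraph of $G/e$, and every connected spanning subgraph of $G/e$ arises this way. So the two games are related through the contraction map $\varphi:V(G)\to V(G/e)$, with $\varphi(u)=\varphi(w)=x$. Write $A$ for the side of $G-e$ containing $u$ and $B$ for the side containing $w$.

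For the direction ``Agents win on $G$ $\Rightarrow$ Agents win on $G/e$'', I would simulate $G$ inside $G/e$. Given Adversary's placement on $G/e$, lift it to $G$; an agent on $x$ is lifted to $u$, say. Given $H_t$, Adversary's move in $G/e$, let $G_t$ be its preimage together with $e$. Run the $G$-strategy and project each agent's position by $\varphi$. A move along $e$ projects to staying put, and every other edge of $G_t$ projects to an edge of $H_t$, so projected moves are legal. A meeting in $G$ projects to a meeting in $G/e$. The only thing to check is that knowledge is never lost: agents at $u$ and $w$ project to the same vertex $x$, so the projected game can only spread knowledge faster. Hence Agents win on $G/e$.

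For the converse, ``Agents win on $G/e$ $\Rightarrow$ Agents win on $G$'', I would run the strategy on $G/e$ against the projected Adversary. An agent at $x$ in $G/e$ is physically at $u$ or $w$. An $H$-move from $x$ into the $A$-side can only be realised from $u$, and a move into the $B$-side only from $w$. The plan is to dilate time: each round of the $G/e$ game is simulated by two rounds of $G$. In the first round an agent that is at the wrong endpoint crosses $e$, which is always present. In the second round it performs the prescribed move. Two ignorant agents at $u$ and $w$ also need attention when the $G/e$ strategy treats them as co-located. For this I would make every agent sitting on $\{u,w\}$ first gather at the endpoint it needs, and argue that meetings in $G/e$ at $x$ become genuine meetings in $G$.

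The main obstacle is that Adversary chooses a fresh subgraph in each of the two $G$-rounds. The edge on which the simulated $G/e$ move was planned may be deleted in the second round. My first attempt is to take the $G/e$ move from the subgraph of the second round only: in the first round the agents move only along $e$ or stay put, which needs no information about that round's subgraph. So the simulated $H_t$ is the projection of the later graph, and the delay just looks like an extra ``stay'' round in $G/e$.

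A second issue is the initial placement. Adversary may put agents on both $u$ and $w$, and this does not correspond to a legal placement on $G/e$ with distinct vertices. I would handle this by spending one preliminary round moving the agent at $w$ to $u$, which merges them. I would then need the fact that the $G/e$ strategy still wins when some agents start co-located. I would argue this by letting co-located agents follow the strategy of one chosen representative; I expect this monotonicity point to need the most care.
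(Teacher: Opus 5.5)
Your reduction to a single cut-edge and your direction ``Agents win on $G$ $\Rightarrow$ Agents win on $G/e$'' are fine. That direction is the paper's projection argument (moves along contracted edges become stays), done more carefully.

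The gap is in the converse, exactly at the obstacle you flagged, and your fix for it is not a legal strategy. Whether an agent sitting at $u$ must cross $e$ in the first of the two rounds depends on whether the $G/e$-strategy will send it into the $B$-side. But you compute that prescription from $H_t=G_{2t}/e$, which Adversary reveals only after the first-round moves have been made. So the first round uses information from the future. Computing the prescription causally, from the first round's subgraph, does not help: the edge at $w$ that the agent needs in the second round may have been deleted.

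Re-querying the strategy every round, letting misplaced agents cross $e$ without advancing the simulated game, can loop forever. The given $G/e$-strategy may choose the side for an agent at $x$ according to Adversary's current subgraph. Adversary can then alternate between a subgraph that sends the agent into $B$ while it is at $u$ and one that sends it into $A$ while it is at $w$, so the simulation never advances. What is missing is an argument that the ability of an agent at $x$ to react on either side in a single step is never essential. Presumably this must use that Adversary's choices on the two sides of the bridge are independent. Nothing in your simulation supplies it.

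The paper's own argument for this direction has the same hole. It asserts that, during the dilated steps, the agent only walks along contracted edges. But the final step out of a contracted vertex uses a non-contracted edge fixed at time $ci$, which Adversary may have removed by then.

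Your handling of the initial placement also fails as stated. Letting a co-located pair follow one representative amounts to invoking a winning strategy for $k-1$ agents, and the hypothesis says nothing about $k-1$ agents. By Theorem~\ref{the:clique}, $K_m$ is an Agents win for $k=m-1$ but an Adversary win for $k=m-2$. Merging the two agents at $u$ also makes matters worse, since afterwards neither can step directly into the $B$-side.

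A workable repair for the start is to leave them at $u$ and $w$ and spend one round shifting agents along a shortest path of the current $H_t$ from $x$ to an unoccupied vertex. Such a vertex exists whenever $\text{BROADCAST}(G/e,k)$ makes sense. Whichever of the two sits at the endpoint incident to the path's first edge takes that edge. No one meets, so the projection is afterwards a legal $G/e$ placement with exactly one knowledgeable agent. This only handles the start; the main gap above remains.
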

\begin{proof}
    Suppose $S$ is some winning strategy for Agents on $\text{BROADCAST}(G,k)$.  Define a new strategy, $S'$, for $\text{BROADCAST}(H,k)$ that follows the action of $S$ on $G$ \emph{except} for when an agent is directed by $S$ to travel along some contracted edge $e$ in $G$.  In that case, the strategy $S'$ directs the same agent to remain in one place.  Every vertex visited by an agent in $G$ is visited at the same time by the same agent in $H$.  Therefore at every time an agent in $G$ becomes knowledgeable, its counterpart in $H$ becomes knowledgeable as well.  Thus $S'$ is indeed a winning strategy for Agents.

    Supposed $H$ can be solved by $k$ agents following some strategy $S$.  Further suppose $H$ is the result of $c$ edge contractions in $G$. Define a new strategy $S'$ for $k$ agents on $G$ that at each time step $ci$ follows the strategy for $H$ at time $i$.  However, if an agent is directed to go to a neighbor in $H$ that is not a neighbor in $G$, i.e. a vertex that was connected to the end of a (series of) contracted edge(s), that agent uses time steps $ci, ci+1, ci+2 \ldots T\leq ci+c-1$ to reach it while the other agents remain still after time step $ci$.  At each step $ci, ci+1, ci+2 \ldots T\leq ci+c-1$ the agent is moving along a contracted edge; since only cut edges were contracted, they are guaranteed to be present in the Adversary's chosen subgraph.  Again, every vertex visited by an agent in $H$ at time $i$ is visited by an agent in $G$ at time $ci+c-1$.  Therefore, for every time $i$ an agent in $H$ becomes knowledgeable, its counterpart becomes knowledgeable by time $ci+c-1$.  Thus $S'$ is indeed a winning strategy for Agents.

\end{proof}

As before, this theorem gives a construction of an infinite family of Agents-win graphs:

\begin{theorem}
    For any $k$, if there exists some graph $G$ such that $\text{BROADCAST}(G,k)$ is an Agents win, then there are infinitely many graphs $H$ for which $\text{BROADCAST}(H,k)$ is an Agents win.
\end{theorem}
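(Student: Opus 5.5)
The plan is to use Theorem~\ref{contract} in reverse: instead of contracting cut-edges, we will \emph{add} them. Let $G$ be a graph for which $\text{BROADCAST}(G,k)$ is an Agents win. For each integer $m\ge 1$, define $H_m$ as follows. Fix a vertex $v\in V(G)$, attach a path $P_m$ with $m$ new vertices $u_1,\dots,u_m$, and add the edges $vu_1, u_1u_2,\dots,u_{m-1}u_m$. The graphs $H_m$ are pairwise non-isomorphic, since $|V(H_m)|=|V(G)|+m$, so this gives infinitely many graphs.

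Next we check that $H_m$ meets the hypotheses of Theorem~\ref{contract}. $H_m$ is connected because $G$ is connected and the path is attached to $v$. Every new edge is a cut-edge of $H_m$: removing $u_{i-1}u_i$ (with $u_0=v$) disconnects $\{u_i,\dots,u_m\}$ from the rest of the graph. Contracting all $m$ of these edges, one after another, merges every $u_i$ into $v$ and returns exactly $G$. The contractions create no loops or multi-edges, because each $u_i$ has no other neighbours in $G$. So $G$ is obtained from $H_m$ by contracting some of its cut-edges. Theorem~\ref{contract}, applied with $H_m$ in the role of ``$G$'' and $G$ in the role of ``$H$'', then says that $\text{BROADCAST}(H_m,k)$ is an Agents win if and only if $\text{BROADCAST}(G,k)$ is. Since the latter holds, every $H_m$ is an Agents win.

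The only delicate point is the tacit requirement in the game that $H_m$ has at least $k$ vertices, so that Adversary can place the agents on distinct vertices. This is automatic, because $|V(H_m)|>|V(G)|\ge k$. Beyond that, the work is in the bookkeeping that the contracted edges really are cut-edges in the larger graph, which the pendant-path construction makes immediate. If one wanted graphs that are not simply $G$ with pendant trees, one could instead glue an arbitrary tree, or another Agents-win graph, to $v$ via a single bridge. The same argument then reduces the first case to $G$, while the second case would need a separate argument.
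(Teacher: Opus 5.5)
Your proposal is correct and follows essentially the same route as the paper. The paper also attaches pendant paths to $G$ (one at every vertex, where you use a single path at one vertex $v$), contracts those cut-edges to recover $G$, and applies the direction of Theorem~\ref{contract} that lifts an Agents win from the contracted graph back to the original; your vertex-count argument for pairwise non-isomorphism of the $H_m$ is a slightly cleaner justification of ``infinitely many'' than the paper's appeal to distinct choices of $\{P_i\}$.
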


\begin{proof}
    Let $P_1, \ldots P_n$ be $n$ paths of arbitrary lengths and let $G$ be a graph on $n$ vertices such that $\text{BROADCAST}(G,k)$ is an Agents win.  Create a new graph $H$ by identifying an end vertex of each path $P_i$ to each vertex $v_i \in G$.  By contracting along the cut-edges of the $\{P_i\}$, $H$ becomes $G$.  Thus by the previous theorem, $\text{BROADCAST}(H,k)$ is an agent win.  By choosing infinitely many distinct sets $\{P_i\}$, infinitely many Agents win graphs are generated.
\end{proof}

This also gives some insight into which graphs are key to understanding $\text{BROADCAST}(G,k)$.  Since cut edges ``don't matter", it is sufficient to study $\text{BROADCAST}(G, k)$ where $G$ has edge-connectivity $\geq 2$.  This tacit observation is used in the analysis of cactus graphs in \cite{DGLM20} and \cite{DGLM26} as well as lollipop graphs in \cite{MRS25, MRSarxiv}.

\section{Adversary automorphism strategies}
The above strategies assumed ``black box" winning strategies for smaller graphs to generate strategies for infinite families.  We now develop explicit strategies by utilizing graph automorphisms.

We first give a strategy for Adversary on the grid graph that defeats an Agents strategy mentioned in \cite{DGLM20}. Of course, it does not defeat the Agents' winning strategy described in \cite{DGLM26}.  However, it is included here as an illustrative example of a type of Adversary strategy.  For further discussion of this type of Agents strategy and why it is important, see \cite{DGLM20}.

\begin{theorem}
    When Agents' strategy is ``choose an ignorant agent with the maximum numbers of ignorant agents on the path from this agent to the source during the current round of the algorithm, move all the agents on this path one step towards the source" then Adversary defeats Agents on $G$ the $m \times n$ grid when the number of agents is $\leq (m-1)(n-1)+2$ and $m>2$ or $n>2$.
\end{theorem}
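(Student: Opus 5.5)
The plan is to exploit the fact that the Agents' strategy is fixed and deterministic, and that the grid has a reflection symmetry. Adversary will choose an initial placement and a sequence of spanning subgraphs so that the greedy "move the path toward the source" rule never brings the knowledgeable agent and the last ignorant agent together.

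\textbf{Initial placement.} The count $(m-1)(n-1)+2$ suggests the following. The grid has $(m-1)(n-1)$ unit square faces (4-cycles). Adversary puts one ignorant agent on a designated corner of each face, for example the lower-left corner of each square. It then adds the source and one further ignorant agent, which we call the \emph{target}. These two are placed at positions exchanged by a nontrivial automorphism $\sigma$ of the grid, a reflection across a middle line; this is possible because $m>2$ or $n>2$. With fewer agents, Adversary simply drops some of the face agents.

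\textbf{Adversary's move each round.} Each round, Agents pick a path from some ignorant agent to the source and move every agent on it one step toward the source. Adversary answers by choosing $G_t$ with the relevant edges deleted. Because every face is a 4-cycle, deleting one edge from a face preserves connectivity, so Adversary can remove the edge the leading agent would cross. The effect is that the greedy rule routes agents around and the path is effectively rerouted. The key invariant to maintain is that the configuration stays $\sigma$-symmetric with respect to the source and the target. Whenever the path Agents choose moves the source side, Adversary mirrors the obstruction on the target side. The knowledgeable set and the last ignorant agent then stay on opposite sides of the reflection axis, so they never come within distance one with the connecting edge present. This is the same mechanism as in the $C_m$ argument.

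\textbf{Connectivity of each $G_t$.} Adversary deletes at most one edge per face, and only edges that are not bridges in the current subgraph. Since the grid is 2-edge-connected and each deletion breaks a distinct 4-cycle, a spanning tree survives, so every $G_t$ is a connected spanning subgraph.

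\textbf{Main obstacle.} The hard step is proving that the greedy selection rule cannot break the symmetric invariant. Ties are one danger. Another is the case where the chosen path crosses the reflection axis and makes the knowledgeable set contain agents on both sides. I would first try a potential argument: the "maximum number of ignorant agents on the path" criterion always prefers a path through a dense column of face agents. Adversary can then make that preferred path lead away from the target by deleting the single axis-crossing edge adjacent to the knowledgeable frontier. So at every round the knowledgeable set stays confined to one half, and the last ignorant agent, the target, stays in the other half. Making this case analysis airtight near the grid boundary, especially in the thin cases $m=3$ or $n=3$, is where I expect most of the work.
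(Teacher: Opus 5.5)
There is a genuine gap: the proposal never gives a mechanism that stops the Agents' rule from making progress, and the invariant you propose cannot be maintained. Adversary's only action is to choose $G_t$. The Agents then move \emph{every} agent on the chosen path one step toward the source.

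A configuration symmetric under a reflection $\sigma$ swapping source and target is therefore not preserved. The mirror image of a move toward the source would be a move toward the target, which never occurs. Adversary moves no agents, so there is nothing it can ``mirror.'' The invariant also fails at time $0$: a reflection across a middle line sends the lower-left corners of the faces to lower-right or upper-left corners.

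Likewise, ``removing the edge the leading agent would cross'' reverses the order of play. $G_t$ is fixed before the path is chosen, and in any connected $G_t$ the chosen path does reach the source. So its leading agent always gets closer to the source in $G_t$. What must be shown is that this progress is undone in later rounds. Keeping the knowledgeable set and the target on opposite sides of an axis does not do that, since the chosen paths must cross the axis and agents can be adjacent across it. The $C_m$ argument you invoke works only because there are two agents and a single relevant edge to delete.

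The missing idea is to use the determinism of the Agents' rule to trap play in a period-two cycle of configurations. The paper places the source at a corner. It fills column $2$ with ignorant agents, and every later column except its top or bottom vertex, alternately; this gives $(m-1)(n-1)+1$ ignorant agents. It then takes $G_t$ to be a Hamiltonian path winding up and down the columns.

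Because $G_t$ is a single path, the path selected by the rule contains every agent. That removes the ties and branching you flag as the main obstacle. Every agent, the source included, advances one step along it. Adversary then switches to the ``flipped'' winding path, in which the direction toward the source is reversed in each column, so the same rule sends every agent back to its original vertex. Alternating the two paths forever means no ignorant agent ever meets the source.

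Your face-corner placement and symmetry argument would need to be replaced by an explicit periodic orbit of this kind.
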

\begin{proof}
    Without loss of generality, let $m\geq n$.  Index the vertices of the grid as $\{1, 2, \ldots m\} \times \{1, 2, \ldots n\}$.  The adversary's strategy is as follows: Place the knowledgeable agent at vertex $(1,1)$.  
    
    Place ignorant agents at $(2, j)$ for all $1\leq j \leq m$, $(2i+1, j)$ for all $1\leq i \leq n/2$ and $2 \leq j \leq m$, and $(2i, j)$ for all $2\leq i \leq n/2$ and $1\leq j \leq m-1$.  In other words, there are no ignorant agents in the first column, every vertex in the second column contains an ignorant agent, and every other column contains $m-1$ agents, missing either the top or bottom vertex alternately.

    Include the edges $\{(i,j), (i, j+1)\}$ for $1\leq i\leq n$, $1\leq j \leq m$; $\{(2i-1,m)(2i,m)\}$ for $1\leq i \leq n/2$; $\{(2i, 1), (2i+1, 1)\}$ for $1 \leq i \leq n/2$.  In other words, create a path winding up and down every column.

    The ignorant agents, following their strategy, will progress one edge towards the knowledgeable agent along this path.  The knowledgeable agent, following its strategy, will progress one edge towards the ignorant agents along this path.  The resulting positions are:
    \begin{itemize}
        \item Knowledgeable agent at $(1,2)$
        \item Ignorant agents at $(1,m)$, $(2i, j)$ for $2\leq j \leq m$, $(2i+1, j)$ for $1\leq j \leq m-1$
    \end{itemize}

    At this point Adversary ``flips" the graph to create a path in which distance decreases in the opposite direction.  To be precise, the new graph contains edges $\{(1,j), (1, j+1)\}$ for $j \neq 2$; $\{(i, j), (i, j+1)\}$ for $2 \leq i\leq n$; $\{(2i, m), (2i+1,m)\}$ for $1\leq i \leq n/2$; $\{(2i-1, 1),(2i,1)\}$ for $1 \leq i \leq n/2$; and $\{(1,1), (2,1)\}$.  In this configuration, the distance-decreasing strategy will lead the agents to return to their original positions.  At this point Adversary repeats its strategy, the agents move to their second position, etc.

\begin{figure}
    \centering
    \includegraphics[width=0.5\linewidth]{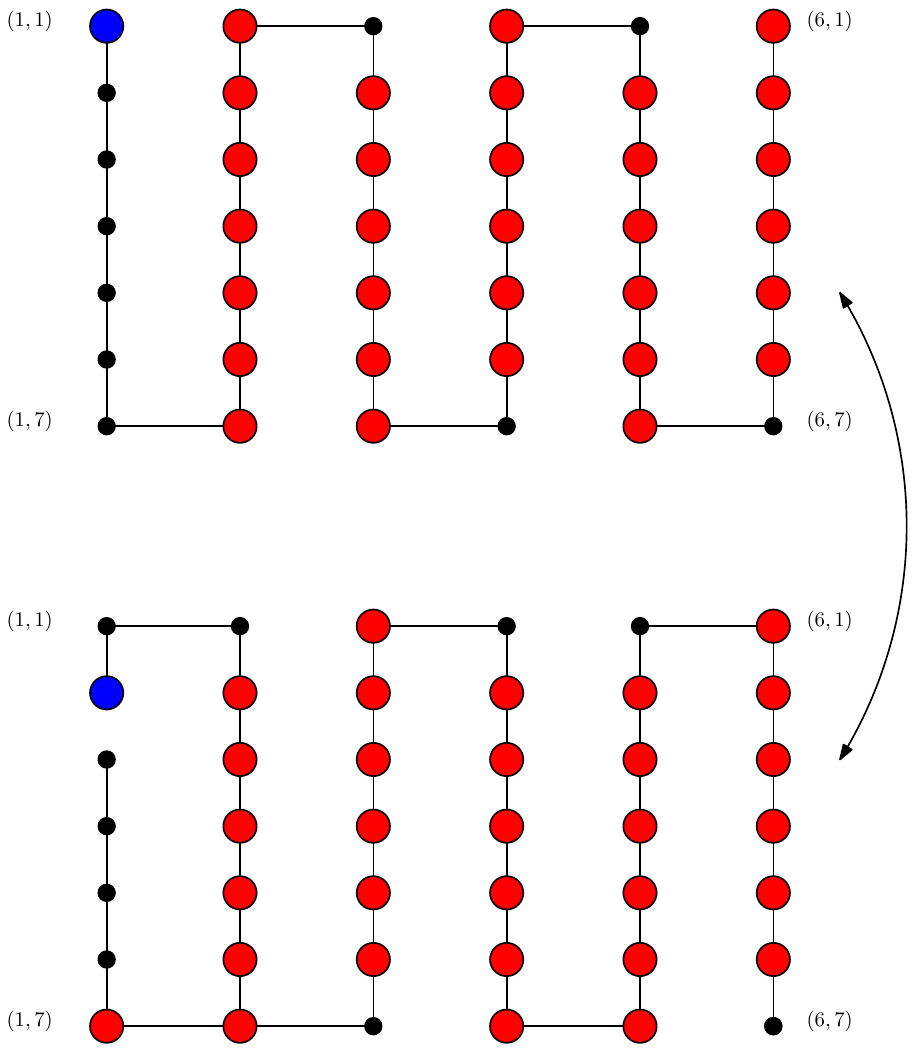}
    \caption{Alternating strategy on grid, knowledgeable agent is blue and ignorant red}
    \label{fig:placeholder}
\end{figure}

\end{proof}

The key idea in this proof was that Adversary could restrict Agents to a few repeating configurations.  We now generalize this idea to arbitrary graphs and Agents strategies.  First, we define potential Agents locations after a single time step.

\begin{definition}
    Say a set $S_2$ is a ``one-step set for $S$ with respect to $H$" if there is a function $t: S \to S_2$ with the property $v=t(v)$ or $\{v, t(v)\} \in E(H)$.  In other words, if agents are located at vertices in $S$ and the current graph is $H$, then the one-step sets are exactly the possible subsequent agent positions.
\end{definition}

We now define a graph property that allows Adversary to trap Agents in a repeating loop, as in the grid strategy. Note that this property is actually stronger than necessary (e.g. it does not hold for the grid).  It is not a vacuous property, though, as it holds for e.g. cliques.
\begin{definition} Say a graph has ``$k$-spanning tree symmetry" if for all $S\subseteq V(G)$ of size $k$, there exists a spanning tree $T_S$ such that for any one-step set ($S_2$) for $S$ with respect to $T_S$,  there is an extension of the one-step function $t$, $\varphi_{S, S_2}: T_S \to V(G)$, that is also an isomorphism of $T_S$.
\end{definition}
With these definitions in hand, it is straightforward to give Adversary's winning strategy.
\begin{theorem}\label{symmetry}
 For any graph $G$ with $k$-spanning tree symmetry, $\text{BROADCAST}(G,k)$ is an Adversary win.  This is true even if the rules are changed so that Agents, not Adversary, chooses the initial placement of agents.  
\end{theorem}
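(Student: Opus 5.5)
The idea is to keep the agents trapped in a situation that never changes up to isomorphism, and to combine this with the observation that a meeting needs an edge. The invariant to maintain is: at every time $t$, the set $S^{(t)}$ of agent positions has size $k$, all agents occupy distinct vertices, and the knowledgeable/ignorant labelling is the same up to relabelling. Since $k$ distinct positions mean no co-location, no agent ever becomes knowledgeable and Adversary wins. This also makes the choice of initial placement irrelevant, which covers the modified rules in Theorem~\ref{symmetry}. Any initial placement of $k$ agents on distinct vertices (guaranteed by the setup) gives a set $S^{(0)}$ of size $k$.

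\emph{Adversary's move.} Given the current position set $S=S^{(t-1)}$ with $|S|=k$, Adversary plays $G_t=T_S$, the spanning tree supplied by $k$-spanning tree symmetry. This is a legal move, since a spanning tree is a connected spanning subgraph. The agents then move along $T_S$ or stay put, so their new positions are the image $S_2=t(S)$ of a one-step function $t$ for $S$ with respect to $T_S$. By hypothesis, $t$ extends to an isomorphism $\varphi_{S,S_2}$ of $T_S$ onto its image. In particular $\varphi$ is injective on $V(G)$, hence $t$ is injective on $S$.

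\emph{Distinctness.} Injectivity of $t$ says exactly that no two agents land on the same vertex, i.e.\ $|S_2|=k$. This is the main point to check, and I expect it to be the main obstacle. The subtlety is whether the definition really forces $t$ to be injective when two agents try to collide. If $t$ were allowed to be non-injective, then $S_2$ would have fewer than $k$ elements, and the definition would only demand extensions for all one-step functions $t$. So I will read the hypothesis as requiring, for every choice of agent moves, an extension of the resulting map $t$ to an isomorphism. A non-injective map cannot extend to an isomorphism, so under this reading every possible agent move keeps the agents on $k$ distinct vertices.

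\emph{Induction.} Once $|S^{(t)}|=k$ with no co-location, the knowledgeable agent never shares a vertex with an ignorant one. Hence no new agent becomes knowledgeable, and the invariant holds at time $t$. Induction on $t$ then shows that the game never ends, so $\text{BROADCAST}(G,k)$ is an Adversary win. The isomorphism also shows that the new configuration is a relabelled copy of the old one, which is the ``repeating loop'' intuition, though distinctness alone suffices for the argument.
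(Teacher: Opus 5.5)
Your proof is correct, and it takes a somewhat different route from the paper's. The paper has Adversary play $T_S$ and then, after the agents reach $S_2$, play the transported tree $\varphi_{S,S_2}(T_S)$. Its invariant is that the whole configuration (tree plus positions) is an isomorphic copy of the previous one, so no ignorant agent ever gets closer to the knowledgeable one. You instead re-invoke the hypothesis for the new position set every round. You use only the injectivity of $\varphi_{S,S_2}$, with the invariant that the agents occupy $k$ distinct vertices.

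Your version is more economical and more robust. It never needs $\varphi_{S,S_2}(T_S)$ to be a subgraph of $G$; the paper needs this for its move to be legal, but the definition leaves it implicit. Nor does it need the check that $\varphi_{S,S_2}(T_S)$ again has the symmetry property relative to $S_2$ in later rounds; that holds by conjugating with $\varphi_{S,S_2}$, but the paper does not spell it out. Your argument also shows the hypothesis is far stronger than what is used. It suffices that for every $k$-set $S$ some spanning tree places the points of $S$ pairwise at distance at least $3$, since that is exactly the condition for every one-step map with respect to that tree to be injective. The paper's route instead gives the ``repeating loop'' picture from the grid example, with inter-agent distances frozen in the current tree, which is more than winning requires.

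You need not commit to a particular reading of the definition. A collision makes $|S_2|<k$, so no map $S\to S_2$ at all is injective, and the hypothesis, which quantifies over all one-step sets, already excludes it.

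Finally, both arguments (and the statement itself) tacitly need $k\ge 2$. For $k=1$ the single agent is knowledgeable from the start, so Agents win immediately.
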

\begin{proof}
    At each stage, Adversary selects the spanning tree $T_S$ that demonstrates $k$-spanning tree symmetry based on the agents' current positions $S$.  The agents' next move is by definition a one-step set for $S$ with respect to $T_S$.  Adversary's next move is to select the spanning tree $\varphi_{S, S_2}(T_S)$.  As this new spanning tree is isomorphic to the original one, and the agents' positions on this new spanning tree are all isomorphic to their positions on the original spanning tree, no ignorant agent has decreased its distance to the knowledgeable agent.  Therefore this is a winning strategy for Adversary.
\end{proof}

\section{Strategies' time complexity}
As stated earlier, certain classes of graphs, like trees and cycles, are straightforward Agents wins.  A natural next question is how long does it take Agents to win?  We begin the analysis on a slightly more general version of $\text{BROADCAST}(G,k)$, in which there are an arbitrary number of knowledgeable and ignorant agents initially.  The proof is instructive when considering more complex graphs, as we do later.

\begin{theorem}
    For the path $P_n$ on $n$ vertices with $x$ ignorant agents and $y$ knowledgeable agents, Adversary can force a time of $(n-x-y)/2$ before an additional agent becomes knowledgeable but no longer.

\end{theorem}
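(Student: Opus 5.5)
On a path, Adversary has no choice of subgraph: $P_n$ is a tree, so every $G_t$ equals $P_n$. The game is therefore purely a question of distances along the line, and Adversary's only real decision is the initial placement. The plan has two halves.

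\textbf{Upper bound.} I will show that Agents can always make a new agent knowledgeable within about $(n-x-y)/2$ steps, whatever the placement.
\begin{itemize}
\item Label the vertices $1,\dots,n$ and list the $x+y$ occupied vertices in order along the path. Since there are at least one ignorant and one knowledgeable agent, some consecutive pair in this list consists of a knowledgeable agent and an ignorant agent with no agent strictly between them.
\item Both agents of that pair walk toward each other, one edge per step. All of their edges are present, because $G_t=P_n$. They meet (or cross onto the same vertex) after $\lceil d/2\rceil$ steps, where $d$ is their distance.
\item To bound $d$, count vertices: the $x+y$ occupied vertices are distinct, so the number of empty vertices is $n-x-y$. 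The $d-1$ empty interior vertices of this particular gap are among them, so $d-1\le n-x-y$.
\item This gives a time of at most about $(n-x-y+1)/2$. I will reconcile the off-by-one with the statement's $(n-x-y)/2$ using the paper's convention: a co-location at time $t$ is counted after the move, and the rounding intended is presumably floor/ceiling.
\item A cleaner route is to pick the \emph{shortest} knowledgeable--ignorant adjacent gap. This only matters for the lower-order terms, since the worst case has all empty vertices in one gap anyway.
\end{itemize}

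\textbf{Lower bound.} Adversary can force roughly this time with the following placement.
\begin{itemize}
\item Put all $y$ knowledgeable agents packed at one end, on vertices $1,\dots,y$.
\item Put all $x$ ignorant agents packed at the other end, on vertices $n-x+1,\dots,n$.
\item The only knowledgeable--ignorant gap then has distance $n-x-y+1$. Every knowledgeable agent is at least that far from every ignorant agent.
\item Each time step reduces any pairwise distance by at most $2$, since both agents can move one edge. So no meeting can occur before $(n-x-y+1)/2$ steps, i.e.\ at least $\lceil (n-x-y)/2\rceil$ steps.
\end{itemize}
This matches the upper bound, completing the argument.

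\textbf{Main obstacle.} The delicate part is purely bookkeeping: getting the exact rounding and the parity of $d$ consistent between the two halves. When $d$ is odd, the two agents cannot land on the same vertex at the same time by walking toward each other, since they would cross on an edge instead. The fix is to have one of them wait for one step, which costs an extra step. I will handle parity explicitly, most likely stating the bound as $\lceil (n-x-y+1)/2\rceil$, and then note that it agrees with $(n-x-y)/2$ up to rounding.
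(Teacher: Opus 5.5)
Your proposal is correct and follows the paper's route. The lower bound uses the extreme placement plus the observation that distance drops by at most $2$ per step, and the upper bound has the agents walk toward each other on a path whose edges cannot be removed; your empty-vertex count in an adjacent knowledgeable--ignorant gap makes the paper's upper bound rigorous and correctly gives the worst-case gap $n-x-y+1$, where the paper writes $n-x-y$.

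One correction to your reconciliation. The quantity $\lceil (n-x-y+1)/2\rceil=\lfloor (n-x-y)/2\rfloor+1$ exceeds $(n-x-y)/2$ by a full step when $n-x-y$ is even, so the two do not agree ``up to rounding''; the accurate reading is that exactly $\lfloor (n-x-y)/2\rfloor$ rounds elapse before the round in which a new agent becomes knowledgeable.
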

\begin{proof}
    Adversary's strategy is to place the knowledgeable agents at one end of the path and the ignorant agents at the other.  At each time step the distance between any pair of agents can decrease by at most $2$.  Therefore one half the distance between them is a lower bound on the runtime.  This is achievable by both sets agents moving towards each other.  On the other hand, since Adversary may not remove any edges of a path, this strategy is always feasible.  The greatest distance between a set of vertices of size $x$ and $y$ on the path is $n-x-y$, which completes the proof.
\end{proof}

We now consider time until all (not just one additional) agents become knowledgeable.

\begin{theorem}
    On $P_n$ starting with $x$ ignorant agents and $y$ knowledgeable agents, Adversary can force a time of $(n-y)/2$ before Agents wins but no longer.  In particular, the time for Agents to win $\text{BROADCAST}(P_n, x+1)$ is tightly bounded by $(n-1)/2$
\end{theorem}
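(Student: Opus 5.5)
The plan mirrors the proof of the previous theorem: on a path Adversary can remove nothing, so the game reduces to a pursuit problem on a fixed line. I will give a placement that forces about $(n-y)/2$ steps, and an Agents strategy that finishes within that time from any placement.

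\textbf{Lower bound.} Adversary puts the $y$ knowledgeable agents on vertices $1,\dots,y$ and the $x$ ignorant agents at the far end, including vertex $n$.
\begin{itemize}
\item Knowledge spreads only through co-location, and each agent moves at most one edge per step.
\item So by time $t$ every knowledgeable agent lies within distance $t$ of some initially knowledgeable agent's start. In particular no knowledgeable agent is to the right of vertex $y+t$.
\item The ignorant agent starting at $n$ stays at or to the right of $n-t$.
\item Hence they cannot meet before $y+t \geq n-t$, that is, before $t \geq (n-y)/2$.
\end{itemize}
The point of this argument is that it covers all indirect relay routes, not only direct meetings.

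\textbf{Upper bound.} Every agent moves toward a common meeting vertex $v$, chosen as the midpoint of the span of the agents, and stays there once it arrives. This is feasible because every edge of $P_n$ is always present.
\begin{itemize}
\item When agents traveling in the same direction meet, they merge and continue together.
\item Once an ignorant agent shares a vertex with a knowledgeable one, it becomes knowledgeable.
\item The farthest agent from $v$ is at distance at most $\lceil (D)/2\rceil$, where $D$ is the span of the agents.
\end{itemize}
The bound $D/2 \le (n-1)/2$ alone is too weak. We need $(n-y)/2$, so the $y$ knowledgeable agents must buy back $y/2$ steps.

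\textbf{Main obstacle.} The obstacle is showing that distinct knowledgeable agents shorten the time when they sit in the interior of the span. The refined plan treats the knowledgeable agents as sources.
\begin{itemize}
\item Each ignorant agent walks toward the nearest knowledgeable agent.
\item Each knowledgeable agent walks toward the farthest ignorant agent on its side.
\end{itemize}
I would first try to prove that the time equals half the largest gap between a knowledgeable agent and the ignorant agents it must reach. In the interval picture, consider the ignorant agents between consecutive knowledgeable agents (or beyond the extreme ones). The relevant distance is at most $n$ minus the $y$ vertices occupied by knowledgeable agents. I expect the case where ignorant agents lie on both sides of all knowledgeable agents to need a separate argument: a single knowledgeable agent cannot serve both ends, so the bound must come from the ignorant agents themselves converging.

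The special case follows by setting $y=1$: the time for Agents to win $\text{BROADCAST}(P_n,x+1)$ is tightly bounded by $(n-1)/2$.
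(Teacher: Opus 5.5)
\textbf{Verdict: genuine gap in the upper bound for $y\ge 2$.}

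\textbf{What works.} Your lower bound is correct. The invariant that every agent knowledgeable at time $t$ sits at or left of vertex $y+t$ handles relays more cleanly than the paper's appeal to its distance lemma. Your common-midpoint strategy also settles the case $y=1$ outright, since then the span is at most $n-1$; that covers the ``in particular'' clause.

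\textbf{The gap.} The missing piece is the upper bound ``but no longer'' when $y\ge 2$. You correctly see that gathering everyone at one vertex only gives $\lceil (n-1)/2\rceil$, but the refined plan is left as something you ``would first try to prove,'' and as stated it fails in two ways.
\begin{itemize}
\item The claim that the time is half the largest gap between a knowledgeable agent and the ignorant agents it serves is false whenever one knowledgeable agent must serve ignorant agents on both of its sides. For $y=1$ with ignorant agents at both ends it is off by a factor of about $2$, as you suspected.
\item The literal rule ``ignorant walks toward nearest knowledgeable, knowledgeable walks toward farthest ignorant'' can fail on parity. With a knowledgeable agent at $1$ and an ignorant one at $2$, the two swap across the edge forever without ever being co-located.
\end{itemize}
The paper's own proof simply asserts that moving toward each other achieves a decrease of $2$ per step, so it glosses over the same point. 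Your proposal still leaves it open.

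\textbf{The missing idea: grouping with fixed meeting vertices, plus a count.} Let $k_1<\dots<k_y$ be the knowledgeable starts, and set $k_0=0$, $k_{y+1}=n+1$. There are $y+1$ regions that can hold ignorant agents but only $y$ knowledgeable agents, so exactly one knowledgeable agent must serve two adjacent regions. Pick any index $j$ and form groups as follows:
\begin{itemize}
\item the agent at $k_j$ takes the ignorant agents in $(k_{j-1},k_{j+1})$;
\item for $i<j$, the agent at $k_i$ takes those in $(k_{i-1},k_i)$;
\item for $i>j$, the agent at $k_i$ takes those in $(k_i,k_{i+1})$.
\end{itemize}
Every ignorant agent lands in exactly one group, and each group contains a knowledgeable agent. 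Each group also lies in an interval containing exactly one of the $k_i$. That interval therefore has at most $n-(y-1)$ vertices, so the group's span is at most $n-y$.

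Now each group walks to its own midpoint vertex and waits there. Fixing the meeting vertex removes the parity problem. Each member arrives within $\lceil (n-y)/2\rceil$ steps and is co-located there with an agent that was already knowledgeable. This gives the matching upper bound $\lceil (n-y)/2\rceil$.
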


The proof of this theorem uses a lemma about Agents' time with respect to distances on a graph.

\begin{lemma}
    On any graph $G$ and for any number of agents $k$, the time for Agents to win $\text{BROADCAST}(G, k)$ is at least $r/2$, where $r$ is the longest shortest distance between a knowledgeable and ignorant agent at the beginning.
\end{lemma}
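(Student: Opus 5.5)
The plan is to prove the lemma with a simple potential-function argument based on graph distance in $G$. Fix the initial configuration. Choose a knowledgeable agent $a$ and an ignorant agent $b$ whose initial shortest-path distance in $G$ equals $r$. (Here $r$ is measured in $G$ itself, and each chosen subgraph $G_t$ is a spanning subgraph of $G$.) The key observation is that distances can only shrink slowly: any path in $G_t$ is a path in $G$, so an agent moving in $G_t$ changes its $G$-distance to any fixed vertex by at most $1$ per time step. Consequently, if two agents are at $G$-distance $d$ at time $t-1$, then at time $t$ they are at $G$-distance at least $d-2$.

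Next I will use this to lower-bound the time until $b$ becomes knowledgeable. Agent $b$ becomes knowledgeable at time $t$ only if it is co-located with some agent that was knowledgeable at time $t-1$. The subtlety is that knowledge spreads through intermediaries, so $b$ might first meet an agent other than $a$. To handle this, I track the set $K_t$ of vertices occupied by knowledgeable agents and bound
\[
d_t = \min_{u \in K_t} \operatorname{dist}_G(u, \mathrm{pos}_t(b)).
\]
The \emph{main obstacle} is that knowledge transfer can teleport $K_t$: an agent that was ignorant and far from $a$ may become knowledgeable and suddenly be close to $b$. The choice of $r$ has to account for this.

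I will therefore interpret $r$ via the initial configuration in the way the statement intends. Let $a$ be the unique knowledgeable agent (as in $\text{BROADCAST}(G,k)$), and let $b$ be an ignorant agent at maximum $G$-distance $r$ from $a$. I claim by induction on $t$ that every agent knowledgeable at time $t$ is within $G$-distance $t$ of $a$'s initial position.
\begin{itemize}
\item At time $0$ only $a$ is knowledgeable, so the claim holds.
\item An agent knowledgeable at time $t$ either was knowledgeable at time $t-1$, in which case it moved at most one step and is within distance $(t-1)+1 = t$, or it met a time-$(t-1)$ knowledgeable agent at a vertex that agent reached in one step, which is also within distance $t$.
\end{itemize}
Meanwhile, $b$ stays within distance $t$ of its own start. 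So if $b$ is knowledgeable at time $t$, the triangle inequality gives $r \le t + t$, that is, $t \ge r/2$.

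Since Agents win only when $b$ is knowledgeable, the winning time is at least $r/2$, regardless of the Agents' strategy and of the Adversary's choices. The Adversary is not even needed for this bound.
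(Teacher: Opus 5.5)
Your proof is correct, but it is organized differently from the paper's.

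\textbf{How the two arguments differ.} The paper argues by contradiction with a per-step potential. If Agents won before time $r/2$, some knowledgeable--ignorant distance would have to drop by at least $3$ in one step. A fixed pair can drop by at most $2$. A newly informed agent $A$ cannot do better, because it stands on the same vertex as the agent $B$ that informed it. You instead prove a cumulative ``light-cone'' bound by induction: every agent knowledgeable at time $t$ lies within $G$-distance $t$ of $a$'s start. You then close with the triangle inequality against $b$'s own radius-$t$ ball.

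Both rest on the same fact: knowledge only appears on a vertex occupied by an agent that was knowledgeable one step earlier. Your version is more explicit. It fixes that distances are measured in $G$, justifies this via $G_t\subseteq G$, and replaces the paper's informal ``by following the same strategy after co-location so could $B$'' with a clean inductive step.

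\textbf{The obstacle you raised does not occur.} The ``teleportation'' problem you describe for $d_t$ cannot happen. By the same co-location fact, every vertex of $K_t$ is within distance $1$ of $K_{t-1}$, which gives $d_t\ge d_{t-1}-2$ directly. This is exactly the paper's route, so the potential argument you abandoned would also have worked.

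\textbf{Extension to several knowledgeable agents.} The paper later applies this lemma with $y$ knowledgeable agents (the path theorems and Theorem~\ref{time}). Your induction carries over verbatim if you replace $a$'s start by the initial knowledgeable set $K_0$ and read $r$ as $\max_b \min_{u\in K_0}\operatorname{dist}_G(u,b_0)$. Reading $r$ as a maximum over all knowledgeable--ignorant pairs would make the multi-source version false. For example, on a long path, add a second knowledgeable agent adjacent to the ignorant one: the game then ends at time $1$ regardless of the pairwise maximum.
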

\begin{proof}
    Suppose not.  Then there exists a time step during which the distance between a knowledgeable and ignorant agent decreases by at least 3.  The distance between any two specific agents can never decrease by more than 2 in a single time step, so a previously-knowledgeable and ignorant agent can never have distance decrease by 3.  Therefore the only possibility is that an agent become knowledgeable that was previously ignorant, and that agent is at least 3 edges closer to an ignorant agent.  
    
    However a previously-ignorant agent $A$ only becomes knowledgeable when it is co-located with a knowledgeable agent $B$.  Therefore if $A$ is able to become 3 edges closer to an ignorant agent, by following the same strategy after co-location so could $B$.  As discussed above, it is impossible for $B$ to become 3 edges closer.
\end{proof}

With this lemma in hand, we return to the proof of the theorem.

\begin{proof}
Due to the ``one agent per vertex" restriction, the maximum distance between a single ignorant agent and the set of knowledgeable agents is at most $n-y$.  Again, the strategy of the ignorant agents moving towards the knowledgeable agent and vice-versa is best possible: the distance between ignorant and knowledgeable can never decrease by more than two per time step, and this strategy achieves that. 

If Adversary places the knowledgeable and ignorant agents at opposite ends of the path, the last agent can become knowledgeable in time $(n-y)/2$ by having them all move towards each other at each time step. 
\end{proof}

In fact, these observations can be generalized from paths to trees.

\begin{theorem}
    On a tree $T$ of diameter $d$, Adversary can force an Agents win of $\text{BROADCAST}(T, 2)$ to take at least time $d/2$.
\end{theorem}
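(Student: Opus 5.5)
The plan is to reduce the statement to the distance lemma proved just above, which says the time for Agents to win $\text{BROADCAST}(G,k)$ is at least $r/2$, where $r$ is the largest initial distance between a knowledgeable and an ignorant agent. It therefore suffices to exhibit an initial placement of the two agents at distance $d$ in $T$. Adversary chooses this placement, so this is legitimate.

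Concretely, since $T$ has diameter $d$, fix a pair of vertices $u,w$ with $\operatorname{dist}_T(u,w)=d$; the endpoints of any longest path in $T$ will do. Adversary places the knowledgeable agent on $u$ and the ignorant agent on $w$. These are distinct vertices whenever $d\geq 1$, and the case $d=0$ (a single vertex) is trivial since the bound is $0$. With only one knowledgeable and one ignorant agent, $r=d$ exactly, so the lemma immediately gives that any Agents win takes at least $d/2$ time steps.

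For robustness I would also note the direct argument, which avoids the generality of the lemma. Adversary has no real choice of subgraph on a tree: a connected spanning subgraph of $T$ must be $T$ itself, since removing any edge disconnects it. So distances are always measured in $T$. In one time step each agent moves along at most one edge, so the distance between the two agents decreases by at most $2$ (triangle inequality). The agents become co-located only when that distance reaches $0$, which requires at least $\lceil d/2\rceil\geq d/2$ steps. By Theorem~\ref{the:tree} the game is an Agents win, so the statement concerns the length of that win, and it is bounded below as claimed.

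There is essentially no obstacle here. The only point needing care is confirming that the "decrease by at most 2 per step" bound holds even though Adversary chooses subgraphs. On a tree this is automatic, since the only admissible subgraph is $T$, and in general removing edges only increases distances, so the bound remains valid.
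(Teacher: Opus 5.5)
Your proof is correct and is the paper's approach: place the two agents at the endpoints of a diametral path and apply the preceding distance lemma, with your direct argument (the only connected spanning subgraph of $T$ is $T$, and the inter-agent distance drops by at most $2$ per step) being a valid self-contained substitute for that lemma. The paper additionally notes that from this placement the agents can meet in about $d/2$ steps by walking toward each other, but that matching upper bound is not needed for the statement as given.
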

\begin{proof}
    By the previous lemma, it is possible for Adversary to force a time of at least $d/2$, simply placing the two agents at opposite ends of the diameter.  Given that placement, the agents can meet in time $d/2$ by moving towards each other at each time step. 
\end{proof}

We now define a metric that will be useful in general time analyses:

\begin{definition}
    Call the $y$-set-diameter of a graph the maximum distance between a single vertex and a set of $y$ vertices.  
\end{definition}
For example, on $P_n$ the $y$-set-diameter is $n-y$.  On $K_n$ the $y$-set-diameter is $1$ for all $y$.  On a binary tree of depth $k$, the $y$-set-diameter is $2k$ for all $y \leq 2^{k-1}$. (Place the $y$-set on the leaves of one depth $k-1$ subtree and the single vertex on one of the other leaves.)

\begin{theorem}\label{time}
    For any graph $G$ with $y$-set-diameter $d$, and an initial configuration with $y$ knowledgeable and $x$ ignorant agents, Adversary has a strategy for forcing a time of $d/2$. In particular, Agents wins $\text{BROADCAST}(G, x+1)$ in time at least $d/2$. 
\end{theorem}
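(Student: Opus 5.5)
The plan is to reduce the statement to the distance lemma proved just above, which says the time for Agents to win is at least $r/2$, where $r$ is the largest initial distance between a knowledgeable and an ignorant agent. Adversary controls the initial placement, so the whole argument is the choice of that placement. The point is to realise the $y$-set-diameter $d$ as an initial knowledgeable–ignorant distance.

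First, fix a vertex $u$ and a $y$-set $Y$ that witness the $y$-set-diameter, so that $\mathrm{dist}(u,Y)=\min_{w\in Y}\mathrm{dist}(u,w)=d$. Adversary places the $y$ knowledgeable agents on the vertices of $Y$ and one ignorant agent on $u$. The remaining $x-1$ ignorant agents go on any other distinct vertices; these are available because the game requires the graph to hold all $x+y$ agents on distinct vertices. In this configuration the ignorant agent at $u$ is at distance exactly $d$ from the set of knowledgeable agents.

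Second, I would invoke the distance lemma, but in its set form. The quantity that matters is the distance from the ignorant agent at $u$ to the nearest knowledgeable agent, and the lemma's proof controls exactly this. In one time step a fixed pair of agents can close their distance by at most $2$. An agent that becomes knowledgeable does so only by being co-located with an existing knowledgeable agent, so it cannot do better than that agent. It follows that $\min_{\text{knowledgeable } B}\mathrm{dist}(A,B)$ decreases by at most $2$ per step for the agent $A$ that started at $u$. That agent only becomes knowledgeable once this minimum reaches $0$, which takes at least $d/2$ steps.

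Third, I would specialise to $\text{BROADCAST}(G,x+1)$. This is the case $y=1$, where the $1$-set-diameter is the ordinary diameter and Adversary's placement is its freedom in the SETUP rule. The general-$y$ statement is just the "slightly more general version" of the game introduced at the start of this section. One should note that Adversary's edge deletions are irrelevant here, since distances in $G_t$ are at least distances in $G$; the strategy can therefore simply play $G$ (or anything connected) at every step.

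The main obstacle is the second step: making sure the lemma really bounds the distance from an ignorant agent to the \emph{set} of knowledgeable agents, including agents that newly become knowledgeable, rather than just to a fixed pair. The relay argument in the lemma's proof handles this. A newly knowledgeable agent starts where some knowledgeable agent already is, so the minimum over knowledgeable agents still drops by at most $2$ per step. I would state that inequality explicitly and check that the rest of the argument follows immediately.
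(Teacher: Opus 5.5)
Your proposal is correct and matches the paper's proof: place the knowledgeable agents on a $y$-set $Y$ and one ignorant agent on a vertex $u$ that together witness the $y$-set-diameter, then invoke the distance lemma. Your explicit check that the minimum distance from that ignorant agent to the (growing) set of knowledgeable agents drops by at most $2$ per step is a worthwhile addition. The lemma is stated for $\text{BROADCAST}(G,k)$, which starts with a single knowledgeable agent, and the paper leaves the extension to $y$ knowledgeable agents implicit.
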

\begin{proof}
    Place the knowledgeable agents on the set of vertices achieving the $y$-set-diameter and at least one of the ignorant agents on the vertex achieving the $y$-set-diameter, and appeal to the previous lemma.
\end{proof}

\section{Open questions}
We have moved a step closer to categorizing when $\text{BROADCAST}(G,n)$ is an Agents or Adversary win.  It was already known that edge connectivity and density are irrelevant; we now know that $k$-spanning-tree symmetry and a graph's block structure are sufficient to determine winning strategies.  However, the question remains if there are specific graph metrics that are both necessary and sufficient to determine who wins.  For example, dismantlability is equivalent to cop-win.  Is there an analogous property in this setting?

Another question of interest is to see replace strategy with randomness.  For example, on $\text{BROADCAST}(K_m,n)$, if Adversary plays randomly, the question becomes equivalent to information sharing on a random connected graph on $m$ vertices.  This is a more realistic model of, e.g., storm interference with communication channels.  (The storm acts randomly rather than strategically.) It also allows for the application of decades of research on the Erd\H{o}s-R\'enyi random graph.

On the other hand, if Agents play randomly, the question is now about hitting time for random walks.  This could be a more realistic model of, e.g., communication in an unknown terrain.  (Each agent is unable to move strategically since they have no information about the overall graph.)  Again, the robust body of work on hitting time for random walks could be applied.

Finally, as discussed in Section 4, once a strategy is determined, it is of interest to know how long its implementation takes.  Among several different winning strategies, which is more efficient?  Among graphs of a particular size, which allows for the slowest or fastest resolution?  Again, this would be relevant to real-world strategizing.  It is also a natural question in algorithmic analysis.
\bibliographystyle{plain}
\bibliography{references.bib}
\end{document}